\newtheorem{theorem}{Theorem}[section]
\newtheorem{problem}[theorem]{Problem}
\newtheorem{prop}[theorem]{Proposition}
\newtheorem{lemma}[theorem]{Lemma}
\newtheorem{observation}[theorem]{Observation}
\newtheorem{claim}[theorem]{Claim}
\newtheorem{conjecture}[theorem]{Conjecture}
\begin{document}
\title{A stability theorem for maximal $K_{r+1}$-free graphs}
\date{\vspace{-5ex}}

\author{Kamil Popielarz \thanks{Department of Mathematics, University of Memphis, Memphis, Tennessee; \textit{kamil.popielarz@gmail.com}}
       \and
       Julian Sahasrabudhe \thanks{Department of Mathematics, University of Memphis, Memphis, Tennessee; \textit{julian.sahasra@gmail.com}}
       \and
       Richard Snyder\thanks{Department of Mathematics, University of Memphis, Memphis, Tennessee; \textit{rsnyder1@memphis.edu}}
      }
    
\maketitle

\begin{abstract}
For $r \geq 2$, we show that every maximal $K_{r+1}$-free graph $G$ on $n$ vertices with \\$(1-\frac{1}{r})\frac{n^2}{2}-o(n^{\frac{r+1}{r}})$ edges contains a complete $r$-partite subgraph on $(1 - o(1))n$ vertices. We also show that this is best possible. This result answers a question of Tyomkyn and Uzzell.        
\end{abstract}

\onehalfspace
\section{Introduction}

For a positive integer $r \geq 2$, a graph $G$ is said to be $(r+1)$\textit{-saturated} (or \textit{maximal $K_{r+1}$-free}) if it contains no copy of $K_{r+1}$, but the addition of any edge from the complement $\overline{G}$ creates at least one copy of $K_{r+1}$. Let $T_{r}(n)$ denote the $r$-\emph{partite Tur\'{a}n graph} that is, the $n$-vertex, complete $r$-partite graph for which each of the $r$ classes is of order $\lfloor n/r \rfloor $ or $\lceil n/r \rceil$. We write $t_{r}(n) = e(T_{r}(n))$, and note that $t_r(n) = (1-\frac{1}{r})\frac{n^2}{2} + O_r(1)$. Whenever we speak of an $r$-partite subgraph, we require that it is induced.

The classical theorem of Tur\'{a}n \cite{t} tells us that, for an integer $r \geq 2$, the maximum number of edges in a graph not containing a $K_{r+1}$ is $t_r(n)$, and that $T_r(n)$ is the unique $K_{r+1}$-free graph attaining this maximum. Erd\H{o}s and Simonovits \cite{erd1, erd2, s} discovered that this extremal problem exhibits a certain `stability' phenomenon: $K_{r+1}$-free graphs for which $e(G)$ is close to $t_r(n)$ must resemble the Tur\'{a}n graph in an appropriate sense. In particular, they proved that every $n$-vertex, $K_{r+1}$-free graph with at least $t_r(n) - o(n^2)$ edges can be transformed into $T_r(n)$ by making at most $o(n^2)$ edge deletions and additions. 

Beyond the seminal work of Erd\H{o}s and Simonovits, we are lead to consider finer aspects of this phenomenon. More generally, it is natural to ask how the structure of a $K_{r+1}$-free graph $G$ comes to resemble the Tur\'{a}n graph as the number of edges $e(G)$ approaches the Tur\'{a}n number $t_r(n)$. For instance, Nikiforov and Rousseau~\cite{Nikiforov_Rousseau}, in the context of a Ramsey-theoretic problem, showed that for $r \ge 2$ and $\varepsilon$ sufficiently small (depending on $r$) the following holds: if $G$ is an $n$-vertex $K_{r+1}$-free graph with $e(G) \ge \left(1 - \frac{1}{r} - \varepsilon\right)n^2/2$, then $G$ contains an induced $r$-partite subgraph $H$ with $|H| \ge (1 - 2\varepsilon^{1/3})n$ and $\delta(H) \ge \left(1 - \frac{1}{r} - 4\varepsilon^{1/3}\right)n$. In other words, $G$ must contain a large $r$-partite subgraph with minimum degree almost as large as $\delta(T_r(n))$. The interested reader should consult the survey of Nikiforov~\cite{Nikiforov} for a few other stability results in a similar vein.

Another result concerning the finer structure of stability is due to Brouwer~\cite{b}, who showed that if $n \geq 2r+1$ and $G$ is a $K_{r+1}$-free graph with $e(G) \geq t_r(n) - \lfloor \frac{n}{r} \rfloor +2$, then $G$ must be $r$-partite. This result has further been rediscovered by several authors~\cite{afgs, ht, kp}, and Tyomkyn and Uzzell~\cite{tu} recently gave a new proof. In this paper, we are interested in the structure of \emph{maximal} $K_{r+1}$-free graphs near the Tur\'{a}n threshold. In this context, Brouwer's result says that if the number of edges of an $(r+1)$-saturated graph $G$ is roughly within $n/r$ of the Tur\'{a}n number $t_r(n)$, then $G$ \emph{is} complete $r$-partite. A natural question then arises, which informally is: when can one guarantee `almost-spanning' complete $r$-partite subgraphs in $(r+1)$-saturated graphs?

Continuing this line of investigation, Tyomkyn and Uzzell \cite{tu} proved, among other results, that every $4$-saturated graph on $n$ vertices and with $t_3(n)- cn$ edges contains a complete $3$-partite graph on $(1-o(1))n$ vertices (they also implicitly dealt 
with the $3$-saturated case). They went on to ask if one can similarly find almost-spanning, complete $r$-partite subgraphs in $(r+1)$-saturated graphs with many edges, for $r \geq 4$. The main result of this paper is to resolve the question of Tyomkyn and Uzzell, in a stronger form. Not only do we show that this phenomenon persists for $(r+1)$-saturated graphs for all $r \geq 2$, but we also determine the edge threshold for which the result fails to hold. In particular, we show the following.

\begin{theorem} \label{thm:MainTheoremSoftForm}
Let $r \geq 2$ be an integer. Every $(r+1)$-saturated graph $G$ on $n$ vertices with $t_r(n) - o(n^{\frac{r+1}{r}})$ edges contains a complete $r$-partite subgraph on $(1-o(1))n$ vertices.  
\end{theorem}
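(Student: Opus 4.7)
The plan is to combine a sharp stability theorem with the maximality hypothesis to locate an $r$-partition of $V(G)$ in which only $o(n)$ vertices deviate from the Tur\'an-like structure, and then use saturation to show the rest induces a complete $r$-partite graph. First, applying a sharpening of Erd\H{o}s--Simonovits -- for instance, F\"uredi's theorem that a $K_{r+1}$-free graph with $t_r(n) - m$ edges is $r$-partite after deleting at most $m$ edges -- the hypothesis $e(G) \ge t_r(n) - o(n^{(r+1)/r})$ yields an $r$-partition $V_1 \sqcup \cdots \sqcup V_r$ with near-balanced parts in which the total number of intra-part edges plus the total number of missing inter-part edges is $o(n^{(r+1)/r})$. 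For $v \in V_i$, define the defect $D(v) = |N(v) \cap V_i| + \sum_{j \ne i}(|V_j| - |N(v) \cap V_j|)$, so $\sum_v D(v) = o(n^{(r+1)/r})$. Set $B = \{v : D(v) \ge \omega(n)\, n^{1/r}\}$ for some slowly diverging $\omega(n) \to \infty$; by Markov, $|B| = o(n)$. Write $V_i' = V_i \setminus B$, $V' = \bigcup_i V_i'$, so $|V'| = (1-o(1))n$, and the goal becomes to show that $G[V']$ is the complete $r$-partite graph on parts $V_1', \ldots, V_r'$.

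No intra-part edges arise in $V'$: if $v, w \in V_i'$ were adjacent, one would greedily pick, for each $k \ne i$, a good vertex in $V_k'$ lying in the common neighborhood of $v, w$ and all previously chosen vertices; at every step the relevant intersection still contains $(1-o(1))n/r$ good vertices, because each chosen vertex has defect below $\omega(n)\,n^{1/r}$ and $|B| = o(n)$. The resulting $K_{r-1}$ together with $\{v,w\}$ is a $K_{r+1}$, contradicting $K_{r+1}$-freeness. For the inter-part direction, suppose $v \in V_i'$ and $w \in V_j'$ satisfy $vw \notin E(G)$; maximality produces a $K_{r-1}$ in $N(v) \cap N(w)$, and by the preceding sub-argument no two of its vertices can both lie in the same $V_k'$, nor can any lie in $V_i' \cup V_j'$, so the $K_{r-1}$ must contain a vertex of $B$.

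The main obstacle is to turn this last observation into a contradiction, since existence of a $K_{r-1}$ witness does not by itself forbid the non-edge. I would address this with a further pruning step: define a secondary bad set $B_2 \subseteq V'$ consisting of every vertex incident in $V'$ to a missing inter-part edge, and bound $|B_2|$ by a double counting that charges each such non-edge to a bad vertex in the associated $K_{r-1}$ witness, while using that no single $u \in B$ can serve as a witness for too many such non-edges without forcing its own defect or the total intra-part edge count beyond the global bound $o(n^{(r+1)/r})$. This should yield $|B_2| = o(n)$, so that $V' \setminus B_2$ has size $(1 - o(1))n$ and induces the desired complete $r$-partite subgraph. The threshold $n^{(r+1)/r}$ is the natural point at which this plan barely succeeds: the per-vertex defect averages $n^{1/r}$, so below the threshold the outliers can be surgically removed on $o(n)$ vertices, while above it the defect is too dispersed to clean up -- which is also why the theorem is best possible.
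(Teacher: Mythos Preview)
Your setup through the first three steps is sound: get a near-optimal $r$-partition, prune high-defect vertices to kill intra-part edges among the survivors, and observe that every remaining inter-part non-edge has its $K_{r-1}$ witness meeting $B$. This parallels the paper's structure, though the paper obtains its bad set $T$ by iterated minimum-degree removal (via Andr\'asfai--Erd\H{o}s--S\'os), which gives the much sharper $|T| = O(m/n)$; your defect threshold $\omega(n)\,n^{1/r}$ only gives $|B| = O\bigl(m/(\omega(n)\,n^{1/r})\bigr)$, and this difference matters below.

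The genuine gap is the final step. The charging you sketch does not yield $|B_2| = o(n)$. Take $r=2$: a bad vertex $u \in V_1 \cap B$ has $N(u)$ independent (triangle-freeness), so it witnesses the entire rectangle of non-edges $(N(u)\cap V_1') \times (N(u)\cap V_2')$. The intra-part side has size at most $D(u)$, but the other side may be $\Theta(n)$; summing the short sides over $B$ recovers only $\sum_{u\in B} D(u) \le O(m) = o(n^{(r+1)/r})$, not $o(n)$, and there is no single $u$ whose over-use forces a contradiction with any global bound. Moreover, $B_2$ as you define it (every vertex touched by a non-edge) need not be small---what is actually required is a small \emph{vertex cover} of the non-edge graph. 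Producing one needs a super-linear gain, and this is exactly the content of the paper's key Lemma~2: in a $K_r$-free $r$-partite graph one can cover all saturating $(A,B)$ non-edges by a set $R$ with $|I_{\widetilde G}(R)| \ge c_r\,|R|^{r/(r-1)}$. Combining this with $|I_{\widetilde G}(R)| = O(m)$ and the bound $|T| = O(m/n)$ is precisely what produces $|R| = O(m\,n^{-1/r}) = o(n)$. For $r\ge 3$ the lemma is proved by induction on $r$, passing to common neighbourhoods of $t$-cliques in $T$ (which are $K_{r+1-t}$-free), and a direct per-vertex charging does not reproduce this. In short, your outline correctly isolates the obstacle but defers the one step that carries the whole weight of the argument.
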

We also show that this theorem is tight in the sense that for every $\delta >0$ there exist graphs $G$ with $t_r(n)-\delta n^{\frac{r+1}{r}}$ edges 
for which the conclusion of Theorem~\ref{thm:MainTheoremSoftForm} fails. 

We actually deduce Theorem \ref{thm:MainTheoremSoftForm} from a stronger, quantitative result, which we now make precise. For a graph $G$ and an integer $r \geq 2$, define the graph parameter 
\[ g_r(G) = \min\{ |T| : T \subseteq V(G), G-T \text{ is complete $r$-partite}\}.\] For $n,m \in \mathbb{N}$, let $\mathcal{S}_r(n,m)$ denote the set of all $(r+1)$-saturated graphs on $n$ vertices with at least $t_r(n) - m$ edges. Then define 
\[
	g_r(n,m) = \max\{ g_r(G): G \in \mathcal{S}_r(n,m)\}. 
\]
The quantitative form of our main theorem, stated below, gives an upper bound for the function $g_r(n,m)$ under some modest conditions on $n$.

\begin{theorem}\label{thm2}
Let $r,n$ be integers satisfying $r \geq 2$, $n \geq 900r^6$. Every $(r+1)$-saturated graph with $t_r(n)- m$ edges contains a complete $r$-partite subgraph on $(1-C_rmn^{-\frac{r+1}{r}})n$ vertices, where $C_r$ is a constant depending only on $r$.
\end{theorem}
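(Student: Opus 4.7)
The plan is to identify a small set $T \subseteq V(G)$ --- defined as the set of vertices with high ``defect'' relative to an appropriate $r$-partition --- such that $G - T$ is complete $r$-partite. First, using a quantitative stability theorem for $K_{r+1}$-free graphs (in the spirit of F\"uredi), I would find a balanced partition $V(G) = V_1 \cup \cdots \cup V_r$ with $\big| |V_i| - n/r \big| \leq 1$ and with $B := \sum_i e(V_i) \leq m$, so that the number of ``missing'' between-part edges satisfies $M \leq 2m$. Define, for $v \in V_i$,
$\mathrm{def}(v) = d_i(v) + \sum_{l \neq i}(|V_l| - d_l(v))$; summing yields $\sum_v \mathrm{def}(v) = 2(B + M) \leq 6m$. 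Fix a threshold $k$ of order $n^{1/r}$ (with constants depending on $r$) and set $T = \{v : \mathrm{def}(v) \geq k\}$. Markov's inequality gives $|T| \leq 6m/k = O(m n^{-1/r})$, matching the desired bound.

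Next I would show that $G - T$ has no within-part edges. Suppose $u, v \in V_i \setminus T$ with $uv \in E$. Since $u, v$ have low defect, $|V_l \cap N(u) \cap N(v)| \geq |V_l| - 2k$ for every $l \neq i$. Counting tuples $(w_l)_{l \neq i}$ with $w_l \in V_l \cap N(u) \cap N(v)$ and subtracting the at most $M \cdot (n/r)^{r-3} \binom{r-1}{2}$ tuples failing to form a clique, one sees that for $m$ in the non-trivial range (i.e.\ $m \leq c_r n^{(r+1)/r}$, since otherwise the theorem is vacuous) the bound $M \leq 2m \ll (n/r)^2$ forces the existence of a $K_{r-1}$ transversal in $N(u) \cap N(v)$. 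Together with $u, v$ this produces a $K_{r+1}$ in $G$, contradicting $K_{r+1}$-freeness. Hence $G[V \setminus T]$ is $r$-partite with respect to the inherited partition.

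The main obstacle is showing that $G - T$ has no missing between-part edges. Suppose $u \in V_i \setminus T$, $v \in V_j \setminus T$ with $i \neq j$ and $uv \notin E$. By saturation, $N(u) \cap N(v)$ contains a $K_{r-1}$, say $W$. Because $u, v \notin T$ have no within-part neighbors in $V \setminus T$, no vertex of $W \cap (V \setminus T)$ lies in $V_i \cup V_j$, and as a clique in an $r$-partite graph with no within-part edges, $W \cap (V \setminus T)$ has size at most $r - 2$; hence $W \cap T \neq \emptyset$. The delicate task is converting this ``one $T$-certificate per non-edge'' into the sharp bound $O(m n^{-1/r})$ on residual non-edges: naive vertex-cover only gives $O(m)$. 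The key is to classify the configurations of the $T$-vertex $t \in W$: it must either form a within-part edge with $u$ (if $t \in V_i$), with $v$ (if $t \in V_j$), or with some $w_l \in W \setminus T$ (if $t$ doubles up with a $W$-vertex in some $V_{p_l}$, $p_l \neq i, j$); in every case the within-part edge is anchored at a low-defect vertex, which has at most $k$ within-part neighbors. Coupling this sparsity with the $|T|$ bound and a careful double-count over the $K_{r-2}$-transversals in $V \setminus (V_i \cup V_j) \setminus T$ supplied by the remaining vertices of $W$ lets one cover all residual non-edges by an additional $O(m n^{-1/r})$ vertices, which we fold into $T$ to obtain the final set.
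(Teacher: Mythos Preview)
Your first three steps are sound and broadly parallel the paper's Lemma~1, though you use a F\"uredi-type defect argument where the paper iterates Andr\'asfai--Erd\H{o}s--S\'os. The observation in step~4 that every residual between-part non-edge has a witness $K_{r-1}$ meeting $T$ is also correct, and is essentially the starting point of the paper's ``type~$t$'' classification of non-edges.

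The gap is in step~5. First, your configuration analysis is incomplete: if $W\cap(V_i\cup V_j)=\emptyset$ then pigeonhole forces two $W$-vertices into a common part $V_l$ ($l\neq i,j$), but \emph{both} may lie in $T$, so the resulting within-part edge need not be anchored at a low-defect vertex (for $r=3$ this is simply the case $W=\{t_1,t_2\}\subseteq T\cap V_3$). More seriously, even when you do obtain an edge $tx$ with $x\notin T$, you have not explained how ``$x$ has at most $k$ within-part neighbours'' converts into a vertex cover of the non-edges of size $O(mn^{-1/r})$. A single pair $(t,x)$ can certify arbitrarily many non-edges $uv$: for instance, when $x=u$ and $t\in V_i$, every $v\in N(t)\cap V_j$ for which $N(t)\cap N(u)\cap N(v)$ contains a $K_{r-2}$ qualifies. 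So counting anchored within-part edges does not bound the number of non-edges, and nothing in the sketch produces the exponent that turns the trivial cover bound $O(m)$ into $O(mn^{-1/r})$.

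The paper handles this step with a genuinely different mechanism. Its key Lemma~2, proved by induction on $r$, shows that in any $K_r$-free $r$-partite graph one can cover all $r$-saturating $(A,B)$ non-edges by a set $R$ satisfying $|I_{\widetilde G}(R)|\ge c_r|R|^{r/(r-1)}$; the proof combines a maximal $K_{r-2}$-matching, H\"older's inequality across the induced neighbourhood subproblems, and a random augmentation that trades off the matching size against the inductive bound. Applied inside the common neighbourhoods of $t$-cliques in $T$ (so that the relevant graphs are $K_{r+1-t}$-free), this yields covers $S_t(i,j)$ whose sizes are controlled by the global non-edge count together with $|\mathcal{C}_t|\le|T|^t$, and the exponent $\frac{r+1-t}{r-t}$ is exactly what converts $e(\widetilde G)=O(m)$ into $|S_t(i,j)|=O(mn^{-1/r})$. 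Your outline gestures at a substitute for this lemma but does not supply one.
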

We shall also give a construction in Section~\ref{section_constructions} showing that this result is tight, up to the value of $C_r$, in a certain range of $m$.
More precisely, if $\varepsilon > 0$, $n \ge 2^{10r}/\varepsilon$ and $(\frac{r-1}{r}+\varepsilon)n \le m \le n^{\frac{r+1}{r}}$, then
\[
    c_{r, \varepsilon}mn^{-1/r} \leq  g_r(n, m) \leq C_rmn^{-1/r}, 
\] 
where $c_{r, \varepsilon}$ is a constant depending on $r$ and $\varepsilon$, and $C_r$ is a constant depending only on $r$.
This explicit form of our main result takes a major step towards a further question of Tyomkyn and Uzzell \cite{tu, TU_euro}, who asked for the determination of $g_3(n,cn)$. 
While we have determined $g_r(n, m)$ up to constants for $m \in \left[ (\frac{r-1}{r}+\varepsilon)n, n^{\frac{r+1}{r}} \right]$, our construction giving the lower bound does not work for $m \in \left[ \frac{n}{r}, \frac{r-1}{r}n \right]$.
We leave the determination of $g_r(n, m)$ in this range as an open problem (see Section~\ref{final_remarks}).

We also consider the situation for $(r+1)$-saturated graphs with $t_r(n) - Cn^{\frac{r+1}{r}}$ edges; that is, just beyond the edge threshold in Theorem~\ref{thm:MainTheoremSoftForm}. In this range it is perhaps most natural to consider ``balanced'' $r$-partite complete subgraphs or, in other words, $r$-partite \emph{Tur\'{a}n} subgraphs. With this in mind we set 
\[
	g^*_r(G) = \min\{|T|: G-T \text{ is an $r$-partite Tur\'{a}n graph}\},
\]
and, for $m,n\in \mathbb{N}$, define \[
	g^*_r(n, m) = \max\{ g^*_r(G): G \in \mathcal{S}_r(n, m)\}. 
\]
Thus, $g^*_r(n, m)$ is the maximum number of vertices one is required to delete from an $(r+1)$-saturated graph on $n$ vertices with at least $t_r(n) - m$ edges such that the remaining graph is an $r$-partite Tur\'{a}n graph. While it is not hard to see that the functions $g_r^*(n,m)$ and $g_r(n,m)$ are very closely related in the range $m = o(n^{\frac{r+1}{r}})$ (as $n \rightarrow \infty$), they take on a somewhat different behaviour when $m \geq Cn^{\frac{r+1}{r}}$. In this range, $g_r^*$ becomes the more natural parameter of study. We show that $g_r^*(n,Cn^{\frac{r+1}{r}})$ increases rapidly as $C$ increases. 


\begin{theorem}\label{thm:RandomConstruction}
Let $r \geq 2 $ be an integer and let $\delta > 0$. There exists a constant $C = C(r,\delta)$ such that, for $n$ sufficiently large, there exists an $n$-vertex $(r+1)$-saturated graph $G$ that contains no copy of $T_r(\delta r n)$ and $e(G) \geq t_r(n) - Cn^{\frac{r+1}{r}}$. In other words, for any sufficiently large $D >0$ we have 
\[g_r^*(n,Dn^{\frac{r+1}{r}}) \geq \left(1-\frac{c'\log(Dr)}{D} \right)n ,
\] for sufficiently large $n$ and an absolute constant $c'$.

\end{theorem}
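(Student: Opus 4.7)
The plan is to give a probabilistic construction. Partition $V(G)$ into $r$ classes $V_1, \ldots, V_r$ of size $n/r$ each, and choose a parameter $k \asymp \delta n/(r \log(1/\delta))$. Independently assign each $v \in V_i$ a uniform random color $f_i(v) \in [k]$. Declare $u \in V_i$ and $v \in V_j$ (with $i \neq j$) adjacent iff $f_i(u) \neq f_j(v)$, and leave each $V_i$ independent. Then $G$ is $r$-partite, hence $K_{r+1}$-free. The expected number of missing cross-edges relative to $T_r(n)$ is $\binom{r}{2}(n/r)^2/k \asymp n\log(1/\delta)/\delta$, which is at most $Cn^{(r+1)/r}$ for $n$ sufficiently large and some $C = C(r,\delta)$; Chernoff provides concentration.

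Since $G$ is $r$-partite, any copy of $T_r(\delta r n)$ corresponds to subsets $W_i \subseteq V_i$ with $|W_i| = \delta n$ whose color-images $C_i := f_i(W_i)$ are pairwise disjoint in $[k]$. Conditioning on $f_1$ one has $P[C_1 \cap C_2 = \emptyset] = \mathbb{E}\bigl[((k - |C_1|)/k)^{\delta n}\bigr]$. The variable $k - |C_1|$ concentrates near $kp$ with $p = (1-1/k)^{\delta n} \le e^{-\delta n/k}$, so the expectation is bounded by roughly $p^{\delta n} = e^{-(\delta n)^2/k}$. With $k \asymp \delta n/(r \log(1/\delta))$ this is $e^{-\Omega(r \log(1/\delta)\cdot \delta n)}$, which beats the union-bound factor $\binom{n/r}{\delta n}^r \le e^{O(r \delta n \log(1/\delta))}$ over all choices of $(W_1, \ldots, W_r)$ once the implicit constant in $k$ is small enough.

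For saturation one needs that every non-edge $uv$ produces a $K_{r+1}$ upon addition. For an intra-class non-edge $u, v \in V_i$, this reduces to finding a \emph{rainbow} $K_{r-1}$-transversal in the common neighborhood of $u$ and $v$: a vertex $w_j \in V_j$ for each $j \neq i$ with colors $f_j(w_j)$ pairwise distinct and distinct from $\{f_i(u), f_i(v)\}$. For a cross-class non-edge with matching colors, the analogous rainbow $K_{r-2}$ is required. Standard first-moment plus concentration plus union-bound arguments verify the existence of these transversals simultaneously for every non-edge, provided $r \ge 3$. The main obstacle is the case $r = 2$: a cross-class non-edge $uv$ with $f_1(u) = f_2(v)$ admits no common neighbor (there is no third class to supply one), so the color construction fails saturation here. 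For $r = 2$ a separate construction is needed in which a small ``defect'' vertex set is introduced in each class, all cross-edges incident to the defect set on the opposite side are deleted, and intra-class edges are added between defect and non-defect vertices to supply the missing common neighbors; triangle-freeness rests on the defect set being disconnected from the opposite class, and a random arrangement of defects precludes large $K_{s,s}$-subgraphs. Combining all three verifications and translating ``no $T_r(\delta r n)$'' into $g_r^*(G) \ge (1 - \delta r) n$, the substitution $\delta = c' \log(Dr)/(Dr)$ yields the stated bound $g_r^*(n, Dn^{(r+1)/r}) \ge (1 - c' \log(Dr)/D)n$.
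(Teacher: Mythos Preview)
Your construction has a fatal saturation gap that is not confined to $r=2$. In your coloring graph every class $V_i$ is independent, so $G$ is genuinely $r$-partite. Now take any cross-class non-edge $u\in V_i$, $v\in V_j$ with $f_i(u)=f_j(v)$. Adding $uv$ leaves the graph $r$-partite, hence still $K_{r+1}$-free; no ``rainbow $K_{r-2}$'' can help, because $u,v$ together with $r-2$ transversal vertices form a $K_r$, not a $K_{r+1}$. Thus none of the cross-class non-edges are $(r+1)$-saturating, for every $r\ge 2$. Worse, if you try to repair this by passing to a maximal $K_{r+1}$-free supergraph, all of these cross-class non-edges get filled in (they are exactly the non-saturating ones), and the resulting graph is the full $T_r(n)$, which certainly contains $T_r(\delta r n)$. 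So the construction cannot be completed to an $(r+1)$-saturated graph without destroying the conclusion.

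The paper sidesteps this by not staying $r$-partite: it introduces an extra class $V_{r+1}=\{x_1,\dots,x_t\}$ of ``hub'' vertices, removes from $T_r(n-t)$ the edges of $t$ randomly placed copies of a gadget $G_{r,s}$, and joins each $x_p$ to the vertices of the $p$th copy. The gadget is engineered so that every removed edge is $r$-saturating inside its copy and hence $(r+1)$-saturating once $x_p$ is adjoined; this is precisely what forces the missing cross-edges between $V_1,\dots,V_r$ to stay missing when one saturates. The random placement of the gadgets (specifically a perfect matching between their two largest classes) then kills all large $K_{\delta n/2,\delta n/2}$ between $V_1$ and $V_2$. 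Your coloring idea does not supply any analogue of the hubs $x_p$, and without them there is no mechanism to make cross-class deletions survive saturation.
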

We do not have any corresponding upper bounds on $g_r^*(n,m)$ in this range of $m$.
 \subsection{Organization and Notation}
The rest of the paper is organized as follows. In Section~\ref{sec2}, we prove our main result, Theorem~\ref{thm2}. Roughly speaking, we first show that any $K_{r+1}$-free graph with many edges has a rather substantial $r$-partite subgraph. We then show that one can refine this resultant $r$-partite graph by making each bipartite graph between partition classes complete, while removing relatively few vertices. In Section~\ref{section_constructions}, we provide the aforementioned constructions which exhibit the tightness of Theorems~\ref{thm:MainTheoremSoftForm} and~\ref{thm2}; in Section~\ref{balanced}, we prove Theorem~\ref{thm:RandomConstruction}. Finally, in Section~\ref{final_remarks} we state some further questions. 

Our notation is mostly standard (see, for example, \cite{bol}). For a subset $S \subseteq V(G)$ we denote by $N_G(S) = \bigcap_{v\in S} N_G(v)$ the common (or joint) neighbourhood of $S$ in $G$. We shall omit the subscript `$G$' if the underlying graph is understood. If $X_1, \dots, X_r$ are disjoint subsets of $V(G)$, we denote by $G[X_1, \dots, X_r]$ the $r$-partite graph induced in $G$ with vertex classes $X_1, \dots, X_r$. We write 
$f \ll g$ to mean $f(n)/g(n) \rightarrow 0$ as $n\rightarrow \infty$.  All other notation we need shall be introduced as necessary.


\section{The Proof of Theorem~\ref{thm2}}\label{sec2}

\subsection{Preliminary lemmas}\label{subsection_PrelimLemmas}
Let us now work towards establishing Theorem~\ref{thm2}. For that we state and prove two lemmas, the second of which is the core of the proof. For the first lemma we use the following theorem of Andr{\'a}sfai, Erd\H{o}s, and S{\'o}s \cite{aes}, although the precise value of the constant $\frac{3r-4}{3r-1}$ is unimportant for us; we only need that it is strictly less than the Tur{\'a}n density.

\begin{theorem}\label{thm3}
For $r\geq 2$ let $G$ be a $K_{r+1}$-free graph on $n$ vertices which is not $r$-partite. Then there is a vertex $v$ of $G$ with
\[ d(v) \leq \frac{3r-4}{3r-1}n. \]
\end{theorem}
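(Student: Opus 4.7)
The plan is to prove the contrapositive by induction on $r$: if $G$ is $K_{r+1}$-free with $\delta(G) > \tfrac{3r-4}{3r-1}n$, then $G$ is $r$-partite.

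For the base case $r = 2$, I would show that any triangle-free graph $G$ with $\delta(G) > 2n/5$ is bipartite. Suppose not; let $C = v_1 v_2 \cdots v_{2k+1}$ be a shortest odd cycle, so $2k+1 \geq 5$ by triangle-freeness, and $C$ must be induced (a chord would split $C$ into an even cycle and a strictly shorter odd one). A short case analysis on cycle-distances shows that any $x \notin V(C)$ has at most two neighbors on $C$: two neighbors of $x$ at cycle-distance $d \geq 2$ yield closing cycles of lengths $d + 2$ and $2k+3-d$ via $x$, of which exactly one is odd; unless $d = 2$, this odd one is strictly shorter than $C$, contradicting minimality. Hence $\sum_{v \in V(C)} d(v) = 2(2k+1) + |E(V(C), V \setminus V(C))| \leq 2(2k+1) + 2(n - (2k+1)) = 2n$, forcing $\delta \leq 2n/(2k+1) \leq 2n/5$, contradicting $\delta > 2n/5$.

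For the inductive step, fix $r \geq 3$ and take any vertex $v$. Set $H = G[N(v)]$; this is $K_r$-free, with $|H| = d(v)$. For $u \in N(v)$, $d_H(u) \geq d(u) + d(v) - n$. The key computation is that $d_H(u)/|H| > \tfrac{3(r-1) - 4}{3(r-1) - 1} = \tfrac{3r-7}{3r-4}$, so the inductive hypothesis applies to $H$. Rearranging, the required inequality is $d(v) > \tfrac{3r-4}{3}(n - d(u))$, which follows from $d(u), d(v) > \tfrac{3r-4}{3r-1}n$: indeed $n - d(u) < \tfrac{3}{3r-1}n$, so $\tfrac{3r-4}{3}(n - d(u)) < \tfrac{3r-4}{3r-1}n < d(v)$. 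By induction, $H$ is $(r-1)$-partite with color classes $V_1, \ldots, V_{r-1}$.

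The main obstacle is lifting this local $(r-1)$-coloring of $N(v)$ to a global $r$-coloring of $G$. Assigning $v$ the new color $r$, one would like to color each $w \in V \setminus N[v]$ with some color $i$ satisfying $N(w) \cap V_i = \emptyset$. If $G[N(v)]$ were complete $(r-1)$-partite, this would succeed — a neighbor of $w$ in each class would form a $K_{r-1}$, which together with $w$ and $v$ would yield a forbidden $K_{r+1}$. However, the inductive output only guarantees $(r-1)$-partiteness, not completeness across the classes. Closing this gap is the main technical issue; I would either strengthen the inductive statement (e.g.\ showing the extremal $G$ must be a blow-up of a canonical $K_{r+1}$-free graph via Zykov-type symmetrization), or argue separately that under the min-degree condition no two vertices in $V \setminus N[v]$ can be adjacent — a hypothetical adjacent pair $u, w$ would force, via $|N(u) \cap N(w)| \geq 2\delta - n > \tfrac{3r-7}{3r-1}n$ and an application of the induction one level lower inside $G[N(u) \cap N(w)]$, a $K_{r+1}$ structure. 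The local induction producing $(r-1)$-partiteness of $H$ is comparatively routine; this extension step is the principal difficulty.
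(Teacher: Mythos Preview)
The paper does not prove this theorem; it is quoted as a result of Andr\'asfai, Erd\H{o}s, and S\'os and used as a black box, so there is no in-paper argument to compare against.

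On your proposal itself: the base case $r=2$ is correct, and in the inductive step the minimum-degree computation for $H=G[N(v)]$ is also correct --- you do get $\delta(H)>\tfrac{3r-7}{3r-4}\,|H|$, so induction legitimately shows that $G[N(v)]$ is $(r-1)$-partite for every $v$. The extension step you flag is a genuine gap, and neither of your two suggested routes closes it. The second route in particular does not work. You propose that an adjacent pair $u,w\in V\setminus N[v]$ forces a $K_{r+1}$ via the inductive hypothesis applied to $G'=G[N(u)\cap N(w)]$; the degree arithmetic does go through and yields that $G'$ is $(r-2)$-partite, but this is not a contradiction --- an $(r-2)$-partite graph need not contain a $K_{r-1}$, and nothing in this configuration manufactures a copy of $K_{r+1}$. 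In fact the implication you want is false outright: take $T_3(n)$ and delete a perfect matching between each pair of classes. The resulting graph is $K_4$-free with $\delta=2n/3-2>5n/8$ for large $n$, yet for \emph{every} vertex $v$ the set $V\setminus N[v]$ contains an edge (the matching-partner of $v$ in another class is adjacent to every other vertex of $v$'s own class, all of which lie in $V\setminus N[v]$). So ``$u,w$ adjacent in $V\setminus N[v]$'' cannot force a $K_{r+1}$ under your hypotheses alone. Your first route --- strengthening the inductive statement to a blow-up/homomorphism form via symmetrization --- is much closer to how the Andr\'asfai--Erd\H{o}s--S\'os theorem is actually proved, but it is a different argument from the one you have written and would have to be carried out from scratch.
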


We shall also use the following result of Brouwer \cite{b}, mentioned in the introduction. 

\begin{theorem} \label{thm:KrfreeVeryClose}
Let $r \geq 2$, $n \geq 2r+1$, and let $G$ be an $K_{r+1}$-free, $n$-vertex graph. If $e(G) \geq t_r(n) - \lfloor \frac{n}{r} \rfloor +2$, then $G$ is $r$-partite.
\end{theorem}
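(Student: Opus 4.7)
The plan is to proceed by induction on $n \geq 2r+1$. The base case $n = 2r+1$ is immediate from Tur\'{a}n's theorem: here $\lfloor n/r \rfloor = 2$, so the hypothesis reads $e(G) \geq t_r(n)$ and the uniqueness part of Tur\'{a}n's theorem forces $G = T_r(n)$, which is $r$-partite. For the inductive step ($n \geq 2r+2$), I assume for contradiction that $G$ is $K_{r+1}$-free, not $r$-partite, and satisfies $e(G) \geq t_r(n) - \lfloor n/r \rfloor + 2$, and invoke Theorem~\ref{thm3} to pick a vertex $v$ with $d(v) \leq \frac{3r-4}{3r-1}n$, splitting on whether $G-v$ is $r$-partite.

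In the easier case, where $G-v$ is not $r$-partite, the inductive hypothesis gives $e(G-v) \leq t_r(n-1) - \lfloor (n-1)/r \rfloor + 1$. Combining this with $d(v) \leq \frac{3r-4}{3r-1}n$, the identity $t_r(n) - t_r(n-1) = n - \lceil n/r \rceil$, and the strict inequality $\frac{3r-4}{3r-1} < \frac{r-1}{r}$ (which reduces to $1 > 0$ after cross-multiplication), a direct arithmetic check --- using the floor on $d(v)$ to cover the boundary range $n \leq r(3r-1)$ --- yields $e(G) = e(G-v) + d(v) \leq t_r(n) - \lfloor n/r \rfloor + 1$, the desired contradiction.

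In the harder case, $G-v$ is $r$-partite with classes $V_1, \ldots, V_r$ of sizes $n_i$; I set $N_i = N(v) \cap V_i$ and $k_i = |N_i|$. Since $G$ is not $r$-partite, $v$ cannot be appended to any single class, so $k_i \geq 1$ for every $i$. Since $G$ is $K_{r+1}$-free, the graph induced on $N(v)$ is $K_r$-free; counting the $\prod k_i$ ``rainbow'' $r$-tuples $(u_1, \ldots, u_r) \in N_1 \times \cdots \times N_r$ and demanding that each contains a non-edge of $G$ yields the constraint
\[ \sum_{i<j} \frac{m_{ij}}{k_i k_j} \geq 1, \]
where $m_{ij}$ denotes the number of non-edges of $G$ between $N_i$ and $N_j$; in particular $\sum_{i<j} m_{ij} \geq \min_{i<j} k_i k_j$. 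I would then bound $e(G) \leq \sum_{i<j} n_i n_j - \sum m_{ij} + d(v)$ and split further on the distribution of the $k_i$. If the $k_i$ are balanced, then $\min k_i k_j$ is large enough that the display above, together with $\sum_{i<j} n_i n_j \leq t_r(n-1)$, already yields $e(G) \leq t_r(n) - \lfloor n/r \rfloor + 1$. If instead some $k_{i_0}$ exceeds $\lceil (n-1)/r \rceil$, the forced inequality $n_{i_0} \geq k_{i_0}$ pushes the partition $(n_1,\ldots,n_r)$ strictly away from the balanced Tur\'{a}n partition and produces a deficit in $\sum_{i<j} n_i n_j$ growing with the imbalance, which compensates for the now-weaker bound on $\min k_i k_j$ and again gives the same conclusion.

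The main obstacle I expect is precisely this last case: neither the $K_r$-free bound on $\sum m_{ij}$ alone, nor the imbalance deficit on $\sum n_i n_j$ alone, is sufficient across all distributions of $(k_1, \ldots, k_r)$. Coordinating the two estimates, and verifying that the bookkeeping remains tight in the transition between the balanced and unbalanced regimes (uniformly in $n$ down to the value $2r+2$), is the technical crux of the proof.
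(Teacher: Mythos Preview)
The paper does not prove Theorem~\ref{thm:KrfreeVeryClose}: it is stated as a result of Brouwer and invoked as a black box in the proof of Lemma~\ref{lemma1}. There is therefore no proof in the paper to compare your proposal against.

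On your attempt itself: the inductive framework built on the Andr\'asfai--Erd\H{o}s--S\'os bound is reasonable, and your case~(a) (where $G-v$ is still not $r$-partite) is correct --- the floor/ceiling arithmetic you allude to does check out for all $n \geq 2r+2$, essentially because $\lfloor n/r \rfloor < \lceil 3n/(3r-1) \rceil$ always. But case~(b) is where the real content of the theorem lies, and you have given only a sketch. Your bound $\sum_{i<j} m_{ij} \geq \min_{i<j} k_ik_j$ is correct but can be dramatically weak when the $k_i$ are unequal (for instance if some $k_i = 1$), and you never spell out how the low-degree hypothesis on $v$ is supposed to enter the case-(b) analysis, nor do you carry out the promised ``imbalance deficit'' estimate. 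For $r=2$ case~(b) is indeed clean (there $m_{12} = k_1k_2 \geq k_1+k_2-1 = d(v)-1$, which is exactly what is needed, with no appeal to the degree bound), but for $r \geq 3$ the bookkeeping you defer is the heart of the matter and is not routine at the sharp constant. As it stands your proposal is a plausible plan rather than a proof; completing case~(b) would require substantially more than you have indicated.
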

Here, then, is our first lemma, which grants us a sizable induced $r$-partite subgraph. We remark that a lemma of this type is not new and appears in a similar form in~\cite{tu}.

\begin{lemma}\label{lemma1}
For $r\geq 2$ there is a constant $d_r$, depending only on $r$, such that the following holds. Let $n \geq 4r$ and $0\leq \varepsilon \leq (30r^3)^{-1}$. If $G$ is an $n$-vertex $K_{r+1}$-free graph with $e(G) \geq  t_r(n) - \varepsilon n^2$, then there is a subset $T\subseteq V(G)$ with $|T| \leq d_r\varepsilon n$ such that $G-T$ is $r$-partite.
\end{lemma}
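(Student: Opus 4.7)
The plan is to construct $T$ by iteratively removing low-degree vertices, with Theorem~\ref{thm3} as the engine. I set $G_0 = G$ and, while $G_i$ is not $r$-partite, Theorem~\ref{thm3} supplies a vertex $v_i \in V(G_i)$ of degree at most $\tfrac{3r-4}{3r-1}(n-i)$, which I delete to form $G_{i+1}$. Let $t$ be the number of iterations before termination, so $T := \{v_0,\ldots,v_{t-1}\}$ has $|T| = t$ and $G - T$ is $r$-partite. The remaining task is to bound $t$ above by $d_r \varepsilon n$ for an appropriate $d_r$.

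Summing the per-step edge losses yields
\[
	e(G_t) \geq e(G) - \frac{3r-4}{3r-1}\Bigl(tn - \binom{t}{2}\Bigr),
\]
while Tur\'{a}n's theorem, applied to the $K_{r+1}$-free graph $G_t$, gives $e(G_t) \leq t_r(n-t)$. Combining these with $e(G) \geq t_r(n) - \varepsilon n^2$ and the elementary estimate $t_r(n) - t_r(n-t) \geq \tfrac{r-1}{r}(tn - t^2/2) - O(r)$, and rearranging, produces
\[
	\Bigl(\tfrac{r-1}{r} - \tfrac{3r-4}{3r-1}\Bigr)\bigl(tn - \tfrac{t^2}{2}\bigr) \leq \varepsilon n^2 + \tfrac{3r-4}{3r-1}\cdot\tfrac{t}{2} + O(r).
\]
The arithmetic identity $\tfrac{r-1}{r} - \tfrac{3r-4}{3r-1} = \tfrac{1}{r(3r-1)}$ is the heart of the argument: it converts the display into $t\bigl(n - t/2 - O(r^2)\bigr) \leq r(3r-1)\varepsilon n^2 + O(r^2)$.

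Provided $t \leq n/2$ (which I verify a posteriori using $\varepsilon \leq (30r^3)^{-1}$) and $n$ is somewhat larger than $r^2$, the left-hand side is at least $\tfrac{1}{2}tn$, so $t \leq Cr^2 \varepsilon n$ for an absolute constant $C$. The residual tiny-$n$ cases with $4r \leq n = O(r^2)$ can be absorbed by taking $d_r$ to be a sufficiently large polynomial in $r$: since $\varepsilon^{-1} \geq 30r^3$, choosing $d_r \geq 30r^3$ makes the bound $d_r\varepsilon n \geq n$ trivial in that regime.

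The main obstacle in this plan is the bookkeeping: the per-step gain $1/(r(3r-1))$ is only of order $r^{-2}$, so the $\binom{t}{2}$ vs.\ $t^2/2$ discrepancy and the $O(r)$ floor correction implicit in $t_r$ sit on the same scale as the main term and must be tracked carefully, otherwise they swamp the gain. Once those are absorbed, the lemma collapses to the single cancellation $\tfrac{r-1}{r} - \tfrac{3r-4}{3r-1} = \tfrac{1}{r(3r-1)}$ above.
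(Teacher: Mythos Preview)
Your overall strategy---iterating the Andr\'asfai--Erd\H{o}s--S\'os deletion and bounding the number $t$ of steps via Tur\'an's theorem and the cancellation $\tfrac{r-1}{r}-\tfrac{3r-4}{3r-1}=\tfrac{1}{r(3r-1)}$---is exactly the paper's, and the main inequality you derive is essentially the one the paper obtains.

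The gap is in your handling of the small-$\varepsilon$ regime. Your sentence ``since $\varepsilon^{-1}\ge 30r^3$, choosing $d_r\ge 30r^3$ makes the bound $d_r\varepsilon n\ge n$ trivial'' has the inequality the wrong way round: $\varepsilon^{-1}\ge 30r^3$ means $\varepsilon\le(30r^3)^{-1}$, so $d_r\varepsilon\le d_r/(30r^3)\le 1$ when $d_r=30r^3$, giving $d_r\varepsilon n\le n$, not $\ge n$. No constant $d_r=d_r(r)$ can force $d_r\varepsilon n\ge n$ uniformly over the allowed $\varepsilon$, and when $\varepsilon$ is tiny the target $t\le d_r\varepsilon n$ may demand $t=0$, which your inequality with its additive $O(r^2)$ slack does not deliver by itself. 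The paper closes this gap by first invoking Brouwer's theorem (Theorem~\ref{thm:KrfreeVeryClose}): if $\varepsilon<(2rn)^{-1}$ then, using $n\ge 4r$, one has $e(G)>t_r(n)-\lfloor n/r\rfloor+1$, so $G$ is already $r$-partite and $T=\emptyset$ works. The main computation is then carried out under the standing assumption $\varepsilon\ge(2rn)^{-1}$, and it is precisely this lower bound on $\varepsilon$ that lets one absorb the additive constant when checking that the key inequality fails at $i=10r^2(3r-1)\varepsilon n$.
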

\begin{proof}
If $\varepsilon < (2rn)^{-1}$,  then $e(G) > t_r(n) - \frac{n}{2r} \geq t_r(n)- \lfloor \frac{n}{r} \rfloor +1$, where the second inequality follows by our assumption that $n \geq 4r$. Therefore by Theorem \ref{thm:KrfreeVeryClose}, $G$ is $r$-partite, and there is nothing to prove. Accordingly, we may assume $\varepsilon \geq (2rn)^{-1}$.

Set $G_1 = G$. Suppose that $G_1, \dots, G_i$ have been defined for some $i \in [n]$. If $G_i$ is not $r$-partite then pick a vertex $v_i \in V(G_i)$ with $d_{G_i}(v_i) \leq \frac{3r-4}{3r-1}|G_i|$ according to Theorem~\ref{thm3}. Set $G_{i+1} = G_i - v_i$. Suppose this process terminates at stage $t\in [n]$. Then $G_{t+1} = G - \{v_1, \dots, v_t \}$ is $r$-partite. We claim that $t \leq d_r\varepsilon n$ for some constant $d_r$ depending only on $r$. This follows from a simple calculation. Indeed as $e(G_{i+1}) \leq \frac{r-1}{2r}(n-i)^2$ holds for every $i \in [t]$, by Tur{\'a}n's theorem we have
\begin{align*}
e(G) &\leq \frac{3r-4}{3r-1}\bigg(n+ (n-1) + \dots + (n-i+1)\bigg) + \frac{r-1}{2r}(n-i)^2 \\
&= \frac{3r-4}{3r-1}\left(ni - {i\choose 2}\right) + \frac{r-1}{2r}(n-i)^2,
\end{align*}
and using the lower bound on $e(G)$ we obtain
\begin{equation}
t_r(n) - \frac{r-1}{2r}(n-i)^2 + \frac{3r-4}{3r-1}{i\choose2}\,\leq \, \frac{3r-4}{3r-1}ni + \varepsilon n^2. \label{eq:turan1}
\end{equation}
Further, using the lower bound $t_r(n) \geq (1-1/r){n\choose 2}$ applied to (\ref{eq:turan1}) and rearranging yields the equivalent inequality

\begin{equation*}
i\left(1-\frac{i}{2n}-\frac{r(3r-4)}{2n}\right) - \frac{1}{2}(r-1)(3r-1) \,\leq \, r(3r-1)\varepsilon n ,
\end{equation*}
which is easily shown to fail if $i = 10r^2(3r-1)\varepsilon n$ when $(2rn)^{-1}\leq \varepsilon \leq (30r^3)^{-1}$. Since the resulting function in (\ref{eq:turan1}) is quadratic in $i$, it is indeed enough to demonstrate that it fails for one value. Accordingly, $t < 10r^2(3r-1)\varepsilon n$ as claimed.
\end{proof}

The next lemma is the heart of the proof of our main theorem. Before stating it we introduce some notation and a bit of terminology. If $G$ is an $r$-partite graph with vertex partition $V_1,\ldots,V_r$, then we denote by $\widetilde{G}[V_1, \dots, V_r]$ the $r$-\emph{partite complement of} $G$ with respect to the partition $V_1,\ldots,V_r$. In other words $\widetilde{G}[V_1, \dots , V_r]$ has vertex set $V_1\cup \dots \cup V_r$ and its edges are precisely the non-edges of $G$ which join two vertices belonging to distinct vertex classes of $V_1,\ldots,V_r$. Often we simply speak of \emph{the} $r$-\emph{partite complement} in the case that the vertex partition we are using is clear from context, and we shall simply write $\widetilde{G}$. We say that a subset $S \subseteq V(G)$ of the vertices of a graph $G$ \textit{covers} an edge $e$ if at least one of the endpoints of $e$ lies in $S$. Further, we let $I_G(S)$ denote the collection of edges of $G$ covered by $S$. An $r$-\textit{saturating edge} in $G$ is an edge of the complement $\overline{G}$ the addition of which creates a copy of $K_r$ in $G$. If $X, Y \subseteq V(G)$ are subsets of vertices, then we say that a non-edge $e$ is an $r$-\emph{saturating} $(X,Y)$ \emph{edge} if it is $r$-saturating with one endpoint in $X$ and the other in $Y$.  A $K_r$-\textit{matching} in a graph $G$ is a collection of vertex disjoint copies of $K_r$ in $G$. Lastly, before stating and proving the lemma, let us collect a simple observation that will be of use.
\begin{observation}\label{obs1}
Suppose that $G$ is a bipartite graph with vertex classes $V_1$ and $V_2$ with $e(G) = \alpha|V_1||V_2|$, where $\alpha \in [0,1]$. Then for any $1 \leq t \leq |V_2|$ there is a subset $W \subseteq V_2$ of size $t$ such that the induced graph on $V_1 \cup W$ has at least $\alpha|V_1|t$ edges.
\end{observation}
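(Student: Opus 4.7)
The plan is a one-line averaging argument, but let me describe it carefully. The conclusion we want is purely about degree sums: we need a subset $W \subseteq V_2$ with $|W| = t$ and $\sum_{v \in W} d_G(v) \ge \alpha |V_1| t$, since the edges of the induced graph on $V_1 \cup W$ are exactly the edges of $G$ incident to $W$.

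First I would note that the average degree of a vertex in $V_2$ (counting edges into $V_1$) equals $e(G)/|V_2| = \alpha |V_1|$. There are two clean ways to finish. The probabilistic route: pick $W$ uniformly at random from $\binom{V_2}{t}$ and compute
\[
\mathbb{E}\bigl[e(V_1, W)\bigr] = \sum_{v \in V_2} d_G(v)\,\Pr[v \in W] = \frac{t}{|V_2|}\sum_{v \in V_2} d_G(v) = \frac{t}{|V_2|}\cdot \alpha |V_1||V_2| = \alpha |V_1| t,
\]
so some realization of $W$ achieves at least the expectation. The deterministic route: order the vertices of $V_2$ as $v_1, v_2, \dots, v_{|V_2|}$ so that $d_G(v_1) \ge d_G(v_2) \ge \dots \ge d_G(v_{|V_2|})$, and take $W = \{v_1, \dots, v_t\}$; since the $t$ largest values in a sequence have sum at least $t/|V_2|$ times the total, we get $\sum_{i=1}^{t} d_G(v_i) \ge \alpha|V_1| t$.

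There is no real obstacle here — the statement is essentially the definition of ``average'' combined with the pigeonhole/rearrangement principle. The only thing to be slightly careful about is that the observation asserts existence for every $t$ in the range $1 \le t \le |V_2|$, so one should make sure the argument handles the boundary case $t = |V_2|$ (trivial, as $W = V_2$ gives exactly $\alpha |V_1||V_2|$ edges) as well as the case $\alpha = 0$ (trivial, any $W$ works). I would present the deterministic version since it is short and does not require introducing probability notation for such a simple fact.
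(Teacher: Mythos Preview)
Your proposal is correct. The paper's proof is precisely your probabilistic route, phrased as a double-counting identity: summing $e(V_1,Y)$ over all $t$-subsets $Y\subseteq V_2$ gives $e(G)\binom{|V_2|-1}{t-1}=\alpha|V_1|t\binom{|V_2|}{t}$, so some $Y$ achieves the average. Your preferred deterministic variant (take the $t$ highest-degree vertices of $V_2$) is an equally short and valid alternative; there is no substantive difference between the two approaches for a fact this elementary.
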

\begin{proof}
This assertion follows from a simple averaging argument. For $Y \subseteq V_2$ let $e(V_1, Y)$ denote the number of edges of $G$ with an endpoint in $Y$. Then \[ \sum_{Y \in V_2^{(t)}} e(V_1, Y)= e(G)\binom{|V_2|-1}{t-1} = \alpha|V_1|t\binom{|V_2|}{t}, \]
so there exists a subset $W \in V_2^{(t)}$ with $e(V_1, W) \geq \alpha|V_1|t$.
\end{proof}

\begin{lemma}\label{lemma2}
Let $r\geq 2$ be an integer and let $G$ be a $K_r$-free, $r$-partite graph with vertex classes $A$, $B,X_1 \dots , X_{r-2}$. Then the following statements hold.
\begin{enumerate}
\item \label{lemma2:part1} There is a subset $R \subseteq A \cup B$ that covers all $r$-saturating $(A,B)$ edges in $G$ and
\[ |I_{\widetilde{G}}(R)|  \geq  c_r|R|^{\frac{r}{r-1}}, \]
for some constant $c_r >0$ depending only on $r$.
\item \label{lemma2:part2} Suppose that $t \geq 1 $ is an integer with $r-t \geq 2$, that $E \subseteq E_{\widetilde{G}}(A, B)$ is a collection of non-edges between $A,B$, and that there exist $K_{r-t}$-free subgraphs $H_1,\ldots,H_s \subseteq G$ such that every element of $E$ is $(r-t)$-saturating in at least one of the graphs $H_1,\ldots,H_s$. Then there exists a set $R' \subseteq A \cup B$ covering every element of $E$ with
\[ |I_{\widetilde{G}}(R')|  \geq c'_{r,t}s^{-\frac{1}{r-t-1}}|R'|^{\frac{r-t}{r-t-1}}, \]
where $c'_{r,t}$ is a constant depending only on $r,t$.
\end{enumerate}
\end{lemma}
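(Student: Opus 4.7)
The proof of both parts rests on a structural observation stemming from the $K_r$-freeness of $G$: for any $K_{r-2}$ subgraph $\tau$ of $G$ with one vertex in each $X_i$, writing $A_\tau = A \cap N_G(\tau)$ and $B_\tau = B \cap N_G(\tau)$, there are no $G$-edges between $A_\tau$ and $B_\tau$, since any such edge together with $\tau$ would yield a $K_r$. Thus $A_\tau \times B_\tau \subseteq E(\widetilde{G})$, and the $r$-saturating $(A,B)$ edges of $G$ are exactly the non-edges lying in some biclique $A_\tau \times B_\tau$. An analogous biclique description applies in Part 2 with $K_{r-t-2}$ witnesses inside the $K_{r-t}$-free subgraphs $H_i$.

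For Part 1 I would form the bipartite graph $H$ on $A \cup B$ of $r$-saturating $(A,B)$ edges and let $R$ be a minimum vertex cover of $H$; by K\"onig's theorem $|R| = \nu(H)$. The observation above forces, for every witness $\tau$, that either $A_\tau \subseteq R$ or $B_\tau \subseteq R$, since otherwise a pair in $(A_\tau \setminus R) \times (B_\tau \setminus R) \subseteq E(H)$ would be uncovered. I would then prove the estimate $|I_{\widetilde{G}}(R)| \geq c_r |R|^{r/(r-1)}$ by induction on $r$. In the base $r = 2$ there are no $X_i$'s, $G$ is edgeless, every pair in $A \times B$ is a $2$-saturating non-edge, and taking $R$ to be the smaller of $A, B$ yields $|R| = \min(|A|,|B|)$ and $|I_{\widetilde{G}}(R)| = |A||B| \geq |R|^2$, so $c_2 = 1$.

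For the inductive step of Part 1 and for Part 2 more generally, I would use a vertex-projection: for each $x \in X_{r-2}$ the subgraph $G_x := G[N_G(x)]$ is $(r-1)$-partite and $K_{r-1}$-free, and an $r$-saturating $(A,B)$ edge of $G$ with witness $\tau$ becomes $(r-1)$-saturating in $G_x$ precisely when $x$ is the $X_{r-2}$-vertex of $\tau$. This presents Part 1 at level $r$ as an instance of Part 2 at level $r$ with $t = 1$ and $s = |X_{r-2}|$. Iterating the projection reduces Part 2 at level $r - t$ to Part 2 at level $r - t - 1$, with the collection $\{H_i\}$ replaced by restrictions $\{H_i \cap N_G(x) : x \in X_{j_i}\}$ (so the number of subgraphs grows by a factor of a class size per step). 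The base case $r - t = 2$, where each $H_i$ is edgeless and any non-edge with both endpoints in $V(H_i)$ is trivially $2$-saturating, is handled directly via a pigeonhole over the $s$ sets $V(H_i)$ together with the observation that a minimum cover intersected with a single $V(H_j)$ contributes $\Omega(|R'|/s)$ non-edges of $\widetilde{G}$ per vertex to $|I_{\widetilde{G}}(R')|$.

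The principal obstacle I foresee is ensuring that the per-step pigeonhole loss is absorbed into exactly the $s^{-1/(r-t-1)}$ factor permitted by the statement. A naive approach that sums contributions across all candidate projected vertices loses a full class-size factor per iteration, which would degrade the exponent. The fix is to pick at each step a single "heavy" vertex $x^\star$ accounting for at least a $1/|X_j|$ fraction of the relevant witnesses via averaging --- this averaging is clean because the K\"onig minimum vertex cover $R$ is tightly tied to the witnesses by our structural observation --- and invoke the inductive hypothesis only on that one projection. Because the exponent $r-t-1$ in the $s$-factor decreases by exactly one per level, the accumulated class-size losses telescope precisely into the claimed $s^{-1/(r-t-1)}$ and $|R'|^{(r-t)/(r-t-1)}$ bounds.
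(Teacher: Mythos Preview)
Your proposal has a genuine gap in Part~1. Reducing Part~1 to Part~2 by projecting along the class $X_{r-2}$ is a valid reduction, but it yields the Part~2 bound with $t=1$ and $s=|X_{r-2}|$, namely $|I_{\widetilde{G}}(R')|\gtrsim |X_{r-2}|^{-1/(r-2)}|R'|^{(r-1)/(r-2)}$; this has both the wrong exponent on $|R'|$ and an uncontrolled factor $|X_{r-2}|$, and nothing in the hypotheses bounds the class sizes in terms of $|R|$. Your ``heavy vertex'' fix does not repair this. Projecting onto a single $x^\star$ only covers those $r$-saturating $(A,B)$ edges whose witness passes through $x^\star$, so the resulting set is not a cover at all; and if instead you keep your K\"onig cover $R$ and try to invoke the inductive hypothesis inside $G_{x^\star}$, the induction manufactures its own covering set rather than bounding $|I_{\widetilde{G}}(R)|$ for the given $R$. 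In neither reading do the class-size losses ``telescope'' into the stated $s^{-1/(r-t-1)}$: each projection step multiplies $s$ by an arbitrary class size, and your sketch gives no mechanism to cancel these factors.

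What is missing is a device that simultaneously controls the number of projections and pays for the resulting loss. The paper's proof takes a \emph{maximal} $K_{r-2}$-matching $\mathcal{M}$ in $G[X_1,\dots,X_{r-2}]$ of size $L$: by maximality every $r$-saturating $(A,B)$ edge is $(r-1)$-saturating in some neighbourhood graph $G(y)$ with $y\in V(\mathcal{M})$, so Part~2 applies with $s=(r-2)L$ and produces a cover $R_0$ with $|I_{\widetilde{G}}(R_0)|\gtrsim L^{-1/(r-2)}|R_0|^{(r-1)/(r-2)}$. The crucial extra idea is that a large $L$ is not a loss but a gain: each clique $K\in\mathcal{M}$ must have small joint neighbourhood in $A$ or in $B$ (otherwise a $K_r$ appears), forcing $\gtrsim L|A|$ non-edges between $V(\mathcal{M})$ and $A\cup B$. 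An averaging argument then lets one augment $R_0$ by $|R_0|$ further vertices covering $\gtrsim L|R_0|$ of these non-edges. Optimising $L^{-1/(r-2)}|R|^{(r-1)/(r-2)}+L|R|$ over $L$ yields exactly $|R|^{r/(r-1)}$. This matching-plus-balancing step is the heart of Part~1 and is absent from your sketch; your K\"onig observation that $A_\tau\subseteq R$ or $B_\tau\subseteq R$ for every witness $\tau$ is correct and pleasant, but it is never actually used to produce the required lower bound.
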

\begin{proof} 
We prove these two statements simultaneously by induction on $r$. The case $r=2$ is trivial: $G$ must be empty. The first part holds by simply choosing the smaller of the two parts of the bipartite graph $G$ and the second part of the statement is vacuous as there is no appropriate choice for $t$.

So, assuming that the result holds for $r-1 \geq 2$, we prove it for $r$. To this end, let $G$ be a $K_r$-free, $r$-partite graph with vertex sets $A$, $B$, $X_1, \dots , X_{r-2}$. We start with the proof of Part \ref{lemma2:part2} as we shall need it to prove Part \ref{lemma2:part1}.

\emph{Proof of Part~\ref{lemma2:part2}:} Suppose we are given a collection $E$ of non-edges between $A,B$ and subgraphs $H_1,\ldots,H_s$ satisfying the requirements of the lemma. Start by enumerating the collection of subgraphs 
\[ \left\lbrace H_i\left[A \cup B \cup X_{i_1} \cup \cdots \cup X_{i_{r-t-2}}\right] : i \in [s], 1\leq i_1 < \dots < i_{r-t-2} \leq r-2  \right\rbrace
\] by $H'_1,\ldots,H'_{s'}$, where $s' = \binom{r-2}{r-t-2}s$ (if $t = r-2$, then we are just listing the subgraphs $H_i[A \cup B]$ for $i =1, \ldots, s$). We now iteratively apply induction inside each of the graphs $H'_1,\ldots ,H'_{s'}$: at each stage we remove a set granted by the induction hypothesis before moving to the next graph in the enumeration.

We shall define a sequence of disjoint subsets $R_1, \dots , R_{s'}$ of $A\cup B$ and a sequence of subgraphs $G_1, \ldots, G_{s'+1}$ of $G$ with the following properties:
\begin{enumerate}
\item $G_1 = G$ and $G_{i+1} = G_i-R_i$ for all $i\geq 1$.
\item $|I_{\widetilde{G}_{i}}(R_i)| \geq c_{r,t}|R_i|^{\frac{r-t}{r-t-1}}$ for each $i\geq 1$, where $c_{r,t}$ is the constant given by 
the induction hypothesis of the lemma (here, the $r$-partite complement $\widetilde{G}_i$ is with respect to the `obvious'  $r$-partition of $G_i$). 
\item Every non-edge of $E$ is covered by $R_1 \cup \dots \cup R_{s'}$.
\end{enumerate}
Suppose that, for $i \in [s']$, the graphs $G_1, \dots , G_i$ have been defined. Apply the induction hypothesis of Lemma~\ref{lemma2} to the $(r-t)$-partite, $K_{r-t}$-free graph $H'_i \cap G_i$ to find a set $R_i \subseteq V(H'_i \cap G_i) \cap (A \cup B)$ with $|I_{\widetilde{G}_i}(R_i)| \geq c_{r,t}|R_i|^{\frac{r-t}{r-t-1}}$ that covers all $(r-t)$-saturating $(A,B)$ edges in $H'_i$. Finally set $G_{i+1} = G_i-R_i$. To check that every non-edge of $E$ is covered by $R_1 \cup \dots \cup R_{s'}$, simply recall that we assumed that every non-edge of $E$ is $(r-t)$-saturating in one of the subgraphs $H_1,\ldots , H_s$ and therefore $(r-t)$-saturating in one of the subgraphs $H'_1,\ldots,H'_{s'}$. Thus, a non-edge $ e \in E$ is $(r-t)$-saturating in some $H'_j$ for some $j \in [s']$, and so it will be covered by one of $R_1,\ldots,R_j$. That is, it will be covered in stage $j$, if it has not been covered already.

To finish the proof of Part 2 of the lemma, we write $R'= R_1 \cup \cdots \cup R_{s'}$. Noting that the sets $R_1,\ldots,R_{s'}$ are pairwise disjoint, we apply H\"{o}lder's inequality to obtain
\[ |R'| \,= \, \sum_{i = 1}^{s'} |R_i| \, \leq \,s'^{\frac{1}{r-t}}\left(\sum_{i=1}^{s'} |R_i|^{\frac{r-t}{r-t-1}}\right)^{\frac{r-t-1}{r-t}} ,   
\] and therefore
\[ s'^{-\frac{1}{r-t-1}}|R'|^{\frac{r-t}{r-t-1}} \, \leq \, \sum_{i=1}^{s'} |R_i|^{\frac{r-t}{r-t-1}} . 
\] Now, since the sets of edges $\{ I_{\widetilde{G}_{i}}(R_i) \}_{i \in [s']}$ are pairwise disjoint (as the sets $R_1, \ldots, R_{s'}$ are pairwise disjoint, and we remove $R_i$ from $G_i$ to define $G_{i+1}$) we may estimate
\begin{align*}
|I_{\widetilde{G}}(R')|  \, = \,  \sum_{i=1}^{s'} |I_{\widetilde{G}_{i}}(R_i)| &\geq\, \sum_{i=1}^{s'} c_{r,t}|R_i|^{\frac{r-t}{r-t-1}} \\
&\geq \, c_{r,t}s'^{-\frac{1}{r-t-1}}|R'|^{\frac{r-t}{r-t-1}} \\
&\geq \, c'_{r,t}s^{-\frac{1}{r-t-1}}|R'|^{\frac{r-t}{r-t-1}}, \end{align*} where $c'_{r,t}$ is a constant depending only on $r, t$. 
Note that the first equality holds since the sets $I_{\widetilde{G}_{i}}(R_i), i \in [s']$ are pairwise disjoint and the sum $\sum_{i=1}^{s'} |I_{\widetilde{G}_{i}}(R_i)|$ counts edges in $\widetilde{G}$ covered by $R'$. This completes the proof of Part~\ref{lemma2:part2} of Lemma~\ref{lemma2}. 

To prove the first part we use the second part along with an extra ingredient.
\paragraph{}
\emph{Proof of Part~\ref{lemma2:part1}  :} We may assume that there is \emph{some} saturating $(A,B)$-edge, otherwise we are trivially done with the choice of 
$R = \emptyset$. So, let $\mathcal{M}$ be a $K_{r-2}$-matching of maximum size in the graph $G[X_1, \dots , X_{r-2}]$ and let $Y$ denote the collection of vertices contained in a clique of $\mathcal{M}$. Note that $\mathcal{M}$ is nonempty as there is some saturating $(A,B)$-edge, and put $L = |\mathcal{M}|$ so that $|Y| = (r-2)L >0$. For each $y \in Y$, let $G(y)$ be the $(r-1)$-partite graph induced on the neighbourhood of $y$ in $G$ with vertex classes $N(y) \cap A , N(y) \cap B$ along with $N(y) \cap X_i$  for $y \not\in X_i, i \in [r-2]$. Our first claim asserts that we may assume there are many non-edges between $Y$ and either $A$ or $B$. 
\begin{claim}\label{claim:many_nonedges}
There are either at least $\frac{1}{4(r-2)}|A||Y|$ non-edges between $Y$ and $A$, or at least $\frac{1}{4(r-2)}|B||Y|$ non-edges between $Y$ and $B$. 
\end{claim}
\begin{proof}
For each $K \in \mathcal{M}$ and $S \subseteq V(G)$ we denote by $d_S(K)$ the number of vertices of $S$ joined to every vertex of $K$, so that $d_S(K) = |N_G(K)\cap S|$. We may assume that, for every $K \in \mathcal{M}$, either $d_{A}(K) \leq \frac{1}{2}|A|$ or $d_{B}(K) \leq \frac{1}{2}|B|$. Indeed, suppose that there is $K \in \mathcal{M}$ with $d_{A}(K) > \frac{1}{2}|A|$ and $d_{B}(K) > \frac{1}{2}|B|$. As $G$ is $K_r$-free we must then count more than $\frac{1}{4}|A||B|$ non-edges between $A$ and $B$. Setting $R$ to be the smaller of $A$ and $B$, we see that trivially $R$ covers all $r$-saturating $(A,B)$ edges and
\[ |I_{\widetilde{G}}(R)|\, >\, \frac{1}{4}|A||B|\, \geq\, \frac{1}{4}|R|^2, \] so we are done (with room to spare). Therefore, we may assume that for every $K \in \mathcal{M}$ either $d_{A}(K) \leq \frac{1}{2}|A|$ or $d_{B}(K) \leq \frac{1}{2}|B|$. 

Write $\mathcal{M}=\mathcal{M}_{A} \cup \mathcal{M}_{B}$, where $\mathcal{M}_{A}$ are those $K\in \mathcal{M}$ which satisfy $d_{A}(K) \leq \frac{1}{2}|A|$ and $\mathcal{M}_{B}$ are those that satisfy $d_{B}(K) \leq \frac{1}{2}|B|$. Then, without loss of generality, we have $|\mathcal{M}_{A}| \geq \frac{1}{2}|\mathcal{M}|$. Now since each $K \in \mathcal{M}_A$ sends at least $\frac{1}{2}|A|$ non-edges to $A$ and since each clique in $\mathcal{M}$ is vertex-disjoint, we have that there are at least $\frac{1}{4}|A||\mathcal{M}|=\frac{1}{4(r-2)}|A||Y|$ non-edges between $Y$ and $A$.
\end{proof}

Now, observe that, by the maximality of $\mathcal{M}$, every $r$-saturating $(A,B)$ edge is $(r-1)$-saturating in one of the graphs $\{ G(y) \}_{ y \in Y }$. Hence we may apply the bound in Part~\ref{lemma2:part2} of the lemma to obtain a set $R_0$ which covers every $r$-saturating $(A,B)$ edge and
\begin{equation}
|I_{\widetilde{G}}(R_0)|  \geq c'_{r,1}(r-2)^{-\frac{1}{r-2}}L^{-\frac{1}{r-2}}|R_0|^{\frac{r-1}{r-2}}. \label{eq: InductionEstimate}
\end{equation}

However, this bound is not useful if $L$ is too large. In order to deal with this issue we shall randomly augment $R_0$ with a set $R'_0$ of $|R_0|$ vertices. The resulting set 
$R = R_0 \cup R_0'$ will only be a factor of two larger than $R_0$ but will cover `many' edges of $\widetilde{G}$ --- enough to achieve a better lower bound on $|I_{\widetilde{G}}(R)|$.

To this end, note that by Claim~\ref{claim:many_nonedges} we may assume that, without loss of generality, there are at least $\frac{1}{4(r-2)}|A||Y|$ non-edges between $Y$ and $A$. Further, we may assume that $|R_0| \leq |A|$. Indeed, suppose otherwise that $|R_0| > |A|$. If $|A||\mathcal{M}| \geq |A|^{\frac{r}{r-1}}$, we are done by choosing $R = A$, since then $|I_{\widetilde{G}}(A)| \geq \frac{1}{4}|A||\mathcal{M}| \geq \frac{1}{4}|A|^{\frac{r}{r-1}}$. Otherwise, $L = |\mathcal{M}| < |A|^{\frac{1}{r-1}} < |R_0|^{\frac{1}{r-1}}$, and using (\ref{eq: InductionEstimate}) yields $|I_{\widetilde{G}}(R_0)| \geq c'_r|R_0|^{\frac{r}{r-1}}$,
so we are done with the choice $R = R_0$.

Hence, assuming that $|R_0| \leq |A|$, by Observation~\ref{obs1}, one can find a subset $R_0^{\prime} \subseteq A $ of size $|R_0|$ such that the number of non-edges between $R_0^{\prime}$ and $Y$ is at least $\frac{1}{4(r-2)}|R_0||Y|= \frac{1}{4}|R_0|L$.

We now set $R = R_0 \cup R_0^{\prime}$ and claim that $R$ is our desired set. First note that $R$ covers all $r$-saturating $(A,B)$ edges in $G$, as $R_0$ already does. To count the total number of non-edges covered by $R$, we note that $|R| \leq 2|R_0|$, and so we have (using~(\ref{eq: InductionEstimate}))
\begin{align} 
2|I_{\widetilde{G}}(R)| &\geq |I_{\widetilde{G}}(R_0)| + |I_{\widetilde{G}}(R'_0)| \notag\\
&\geq c'_{r,1}(r-2)^{-\frac{1}{r-2}}L^{-\frac{1}{r-2}}|R_0|^{\frac{r-1}{r-2}} + \frac{1}{4}|R_0|L \notag \\
&\geq c'L^{-\frac{1}{r-2}}|R|^{\frac{r-1}{r-2}} + \frac{1}{8}|R|L, \label{eqn1}
\end{align}
where $ c' = c'_{r,1}2^{-\frac{r-1}{r-2}}(r-2)^{-\frac{1}{r-2}}$. A simple analysis reveals that the quantity on the right-hand side of (\ref{eqn1}) is minimized in $L$ if $L = (8c'/(r-2))^\frac{r-2}{r-1}|R|^\frac{1}{r-1}$. Substituting this value of $L$ back into (\ref{eqn1}) yields
\[ |I_{\widetilde{G}}(R)|\, \geq \, c_r|R|^\frac{r}{r-1}, 
\] where $c_r$ is a constant depending only on $r$. 
\end{proof}

\subsection{Finishing the proof}\label{subsection_Finishing}
We can now proceed to finish the proof of Theorem~\ref{thm2}.
\begin{proof}[Proof (of Theorem~\ref{thm2})]
Let $r, n$ be integers with $r \ge 2$ and $n \ge 900r^6$, and suppose that $G$ is an $n$-vertex $(r+1)$-saturated graph with $e(G) \geq t_r(n) - m$. For notational convenience we shall write $m = \varepsilon n^2$. Thus we must find a complete $r$-partite subgraph of $G$ on at least $(1-C_r\varepsilon n^{\frac{r-1}{r}})n$ vertices, for some constant $C_r$ depending only on $r$. We shall additionally insist that $C_r \ge 1$. The result is then trivial if $\varepsilon > n^{-\frac{r-1}{r}}$ and so we may assume that $\varepsilon \leq n^{-\frac{r-1}{r}}$. Since $n \geq (30r^3)^2$ we have that $\varepsilon \leq (30r^3)^{-1}$, so we may apply Lemma~\ref{lemma1} to obtain a subset $T \subseteq V(G)$ such that $|T| \leq d_r\varepsilon n$ and $G - T$ is $r$-partite. Let the vertex classes of $G - T$ be $V_1, \dots , V_r$. We now simply apply Part~\ref{lemma2:part2} of Lemma~\ref{lemma2} to common neighbourhoods of appropriate subsets of $T$. But before we do this we need a bound on $e(\widetilde{G}[V_1,\ldots,V_r])$, the number of non-edges between the parts $V_1,\ldots,V_r$, which is the content of the following claim.
\begin{claim}\label{claim:few_nonedges}
$e(\widetilde{G}[V_1,\ldots,V_r]) \leq (d_r+1)\varepsilon n^2$.
\end{claim}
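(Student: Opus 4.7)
The plan is to view the non-edges between parts as the deficit between $e(G-T)$ and the number of edges in the complete $r$-partite graph on $V_1, \ldots, V_r$. Two one-line bounds then close the argument: a lower bound on $e(G-T)$ coming from the hypothesis $e(G) \ge t_r(n) - \varepsilon n^2$, and an upper bound on the size of the ambient complete $r$-partite graph from Tur\'an's theorem.

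First I would bound the edges of $G$ incident to $T$: since $|T| \le d_r \varepsilon n$, the sum of degrees in $T$ is at most $|T|(n-1)$, and hence at most $d_r \varepsilon n^2$ edges of $G$ touch $T$. Consequently,
\[
e(G - T) \;\ge\; e(G) - d_r\varepsilon n^2 \;\ge\; t_r(n) - (d_r+1)\varepsilon n^2.
\]

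Next, since $G-T$ is $r$-partite with vertex classes $V_1,\dots,V_r$, it is a subgraph of the complete $r$-partite graph on these classes, which has exactly $\sum_{i<j}|V_i||V_j|$ edges. By definition,
\[
e(\widetilde{G}[V_1,\dots,V_r]) \;=\; \sum_{i<j}|V_i||V_j| - e(G-T).
\]
The complete $r$-partite graph on $V_1,\dots,V_r$ is $K_{r+1}$-free on $n-|T|$ vertices, so Tur\'an's theorem gives $\sum_{i<j}|V_i||V_j| \le t_r(n-|T|) \le t_r(n)$, where the last inequality uses monotonicity of $t_r$ in $n$. Combining the two bounds yields
\[
e(\widetilde{G}[V_1,\dots,V_r]) \;\le\; t_r(n) - \bigl(t_r(n) - (d_r+1)\varepsilon n^2\bigr) \;=\; (d_r+1)\varepsilon n^2,
\]
as claimed.

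There is no real obstacle here; the claim is a bookkeeping consequence of Lemma~\ref{lemma1} together with the edge lower bound on $G$. The only thing to watch is that ``edges incident to $T$'' must be counted so that edges inside $T$ are not dropped or overcounted, which the crude degree-sum estimate $|T|(n-1)$ handles safely.
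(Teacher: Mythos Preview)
Your proof is correct and, in fact, cleaner than the paper's. The paper argues via the total complement $\overline{G}$: it upper-bounds $e(\overline{G})$ from the edge hypothesis and lower-bounds it by $\sum_i \binom{|V_i|}{2} + e(\widetilde{G}[V_1,\ldots,V_r])$, then applies convexity to $\sum_i \binom{|V_i|}{2}$ and splits into the cases $|T| \ge r/2$ and $|T| < r/2$ to absorb lower-order terms. Your route bypasses all of this: you bound edges lost to $T$ by the crude degree sum, and then cap $\sum_{i<j}|V_i||V_j|$ directly by $t_r(n-|T|) \le t_r(n)$ via Tur\'an. The Tur\'an step is the key simplification --- it replaces the convexity estimate and the case split in one stroke, while the paper's argument is a bit more hands-on with the arithmetic. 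Both yield exactly $(d_r+1)\varepsilon n^2$.
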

\begin{proof}
First note that if $|T| = 0$, then $G$ is $r$-partite and $e(\widetilde{G}[V_1,\ldots,V_r]) = 0$ since $G$ is $(r+1)$-saturated. So, we may assume that $|T| \geq 1$. In this case, the number of non-edges $e\left(\overline{G}\right)$ satisfies $e\left(\overline{G}\right) \leq {n\choose2} - t_r(n) + \varepsilon n^2$, and also
\[ e\left(\overline{G}\right) \ge \sum_{i=1}^{r}\binom{|V_i|}{2} + e(\widetilde{G}[V_1,\ldots,V_r]) \geq r\binom{\frac{n-|T|}{r}}{2} + e(\widetilde{G}[V_1,\ldots,V_r]), \]
by convexity of the function $x \mapsto \binom{x}{2}$. By using the estimate $t_r(n) \geq \left(1-\frac{1}{r}\right){n\choose2}$, combining the lower and upper bounds on $e\left(\overline{G}\right)$, and rearranging, we get

\begin{align}
e(\widetilde{G}[V_1,\ldots,V_r]) \leq \varepsilon n^2 + \frac{1}{r}{n\choose2} - r\binom{\frac{n-|T|}{r}}{2} &< \varepsilon n^2 + \frac{r-1}{2r}n + \frac{n|T|}{r} \label{eq:nonedgeEstimate1} \\
&= \varepsilon n^2 + 2n|T|\left(\frac{r-1}{4r|T|} + \frac{1}{2r}\right). \label{eq:nonedgeEstimate2}
\end{align}
Now, if $|T| \geq r/2$, then (\ref{eq:nonedgeEstimate2}) is at most $\varepsilon n^2 + \frac{2n|T|}{r}$, and we are done. If $|T| < r/2$, then by (\ref{eq:nonedgeEstimate1}) we have $e(\widetilde{G}[V_1,\ldots,V_r]) < \varepsilon n^2 + n = \left(1 + \frac{1}{\varepsilon n}\right)\varepsilon n^2$. But clearly $\frac{1}{\varepsilon n} \leq d_r$, as otherwise $|T| < 1$. Hence, the desired bound on $e(\widetilde{G}[V_1,\ldots,V_r])$ holds.
\end{proof}

For $t \in [r-1]$ let $\mathcal{C}_t$ denote the collection of copies of $K_t$ contained in $G[T]$, the graph induced on $T$. We say a non-edge $e$ is of \emph{type} $t$ if it lies between two of the classes $V_1,\ldots,V_r$, and the addition of $e$ to $G$ creates a $K_{r+1}$ with exactly $t$ vertices in $T$. Since $G$ is $(r+1)$-saturated and $G[V_1, \ldots , V_r]$ is a $K_{r+1}$-free graph, every non-edge between two of the classes $V_1,\ldots,V_r$ is of type $t$ for some $t \in [r-1]$. For $t \in [r-1]$ we let $E_t$ denote the collection of type $t$ non-edges.

Set $V = V_1 \cup \cdots \cup V_r$ and define $\mathcal{G}_t = \{ G[N(K)\cap V] : K \in \mathcal{C}_t \}$ for $t \in [r-1]$. For each $i \not= j \in [r]$, we show that one can make the induced bipartite graph $G[V_i, V_j]$ complete by removing a relatively small number of vertices. Doing this in succession for each of the $r\choose 2$ pairs $V_i$, $V_j$ with $i \neq j$ then yields a complete $r$-partite subgraph. 

So fix $i \not= j \in [r]$ and note that for each $t \in [r-1] $, each graph in the collection $\mathcal{G}_t$ is $K_{r+1-t}$-free and every $(V_i, V_j)$ non-edge of $E_t$ is $(r+1-t)$-saturating in one of the graphs of $\mathcal{G}_t$. So for each $t \in [r-1]$ we may invoke Part~\ref{lemma2:part2} of Lemma~\ref{lemma2} to obtain a set $S_t(i,j) \subseteq V_i\cup V_j$ that covers every $(r+1)$-saturating $(V_i,V_j)$ edge of type $t$ and \[ \left|I_{\widetilde{G}[V_1,\ldots,V_r]} (S_t(i,j)) \right| \geq  c'_{r, t}|\mathcal{C}_t|^{-\frac{1}{r-t}}|S_t(i,j)|^{\frac{r+1-t}{r-t}}.\] 
Moreover, by Claim~\ref{claim:few_nonedges} we have $|I_{\widetilde{G}[V_1,\ldots,V_r]} (S_t(i,j)) | \leq e(\widetilde{G}[V_1,\ldots,V_r]) \leq (d_r+1)\varepsilon n^2$, and using the bound $|\mathcal{C}_t| \leq |T|^t \le (d_r\varepsilon n)^t$, we obtain
\begin{align*}
|S_t(i,j)|^{\frac{r+1-t}{r-t}} &\leq (c_{r, t}')^{-1}(d_r+1)\varepsilon n^2|\mathcal{C}_t|^{\frac{1}{r-t}}\\
&\le (c_{r, t}')^{-1}(d_r+1)d_r^{\frac{t}{r-t}}\varepsilon^{\frac{r}{r-t}}n^{\frac{2r-t}{r-t}}.
\end{align*}
It follows that
\[
	|S_t(i, j)| \le C_{r,t}\left(\varepsilon^{\frac{r}{r+1-t}}n^{\frac{r-1}{r+1-t}}\right)n,
\]
where $C_{r,t}$ is a constant depending only on $r,t$, for each $t \in [r-1]$, and $i\not=j \in [r]$. 

As every edge between the parts $V_1,\ldots ,V_r$ is of type $t$ for some $t \in [r-1]$, we conclude that the set $S = \bigcup_{t = 1}^{r-1} \bigcup_{i \not= j \in [r]} S_t(i,j)$ covers every non-edge between the parts $V_1,\ldots,V_r$. It follows that $G - S - T$ is a complete $r$-partite graph. To bound $|S|$ recall that $\varepsilon \leq n^{-\frac{r-1}{r}}$. Then we have 
\begin{align*}
|S| &\leq \sum_{t = 1}^{r-1} \sum_{i\not= j \in [r]} C_{r,t}\left(\varepsilon^{\frac{r}{r+1-t}}n^{\frac{r-1}{r+1-t}}\right)n \\
&\leq (r-1)\binom{r}{2}\max_{t \in [r-1]}\{C_{r,t}\}\left(\varepsilon n^{\frac{r-1}{r}}\right)n\\
&\leq C'_r\left(\varepsilon n^{\frac{r-1}{r}}\right)n,
\end{align*}
where the constant $C_r'$ depends only on $r$. It is here that we have used the condition $\varepsilon \leq n^{-\frac{r-1}{r}}$, since this implies that the dominating term in the sum above is the one with $t = 1$. Hence we have found a complete $r$-partite subgraph on 
\[ n - |S| - |T| \geq n - C'_r\left(\varepsilon n^{\frac{r-1}{r}}\right)n - d_r\varepsilon n \geq \left( 1- C_r \varepsilon n^{\frac{r-1}{r}}\right) n\]
vertices, for some constant $C_r$. This completes the proof. \end{proof}

\section{Constructions}\label{sec:constructions}
\label{section_constructions}\label{subsec:aux_constructions}
\subsection{An auxiliary construction: removed edges}
The aim of this section is to describe a family of constructions that demonstrate the optimality of Theorem~\ref{thm:MainTheoremSoftForm}. 
We begin by inductively constructing a family of auxiliary graphs $G_{r,s}$, for each $r, s \in \mathbb{N}$, $r, s \geq 2$. 
It is useful to keep in mind that the edges of the $r$-partite graph $G_{r,s}$ record edges to be removed from a later graph.
First let us introduce a family of $r$-partite graphs $G_{r, s_{1}, s_{2}, \dots, s_{r-1}}$ for which $G_{r, s}$ will be a special 
case.

    \emph{Construction of $G_{r, s_{1}, \dots, s_{r-1}}$ :}
    Let $s_{1}, \dots, s_{r-1} \geq 2$ be integers. We define a sequence of graphs $G_{2,s_1},G_{3,s_1,s_2},\ldots ,G_{r,s_1,\ldots,s_{r-1}}$ inductively,
    where $G_{i,s_1,\ldots,s_{i-1}}$ will be an $i$-partite graph.
    First, we define $G_{2, s_{1}}$ to be the complete bipartite graph $K_{s_{1}, s_{1}}$.
    Now let $2 \leq t \leq r-1$ and assume that we have defined the $t$-partite graph $G_{t, s_{1}, \dots, s_{t-1}}$. We define $G_{t+1, s_{1}, \dots, s_{t}}$ as follows.
    Let $ H_1,\ldots, H_{s_{t}}$ be vertex disjoint copies of $G_{t, s_{1}, \dots, s_{t-1}}$ and suppose $H_p$ has vertex classes $A^p_1,\ldots, A^p_t$, for each $p \in [s_{t}]$. 
    Define $G_{t+1, s_{1}, \dots, s_{t}}$ to be the $(t+1)$-partite graph with the first $t$ vertex classes defined as $A_i := A_i^1 \cup \cdots \cup A_i^{s_{t}}$, for $i \in [t]$, and with the $(t+1)$st vertex class defined as a collection of new vertices $A_{t+1} = \{ x_1,\ldots,x_{s_{t}} \}$. 
    We define the edge set $E(G_{t+1, s_{1}, \dots, s_t}) = \bigcup_{p=1}^{s_{t}} E(H_{p}) \cup \left\{ x_{p}y: y \in H_{q}, p, q \in [s_{t}], p \neq q \right\}$.

Now let $G_{r, s} = G_{r, 2s, s, \dots, s }$, for $s \ge 2$.
This choice of the parameters $s_1, \ldots , s_{r-1}$ is optimal for its use later in the construction; however, for brevity, we shall omit this calculation.
The following proposition records several useful properties of our family of graphs $G_{r,s}$.

\begin{prop}
    \label{prop_construction}
    The graph $G_{r, s}$ has the following properties.
    \begin{enumerate}
        \item \label{prop:ConstructionRPartite} $G_{r, s}$ is $r$-partite with vertex partition $A_1 \cup \cdots \cup A_r$ (and hence it makes sense to consider the $r$-partite complement of $G_{r,s}$ with respect to this partition).
        \item \label{prop:ConstructionKrFree} The $r$-partite complement $\widetilde{G}_{r,s}$ is $K_{r}$-free.
        \item \label{prop:ConstrKr-1} For each $i \in [r]$, there is a copy of $K_{r-1}$ in $\widetilde{G}_{r,s} \setminus A_{i}$.
        \item \label{prop:ConstrSat} Every edge between two different vertex classes of $G_{r,s}$ is $r$-saturating in $\widetilde{G}_{r, s}$.
        \item \label{prop:ConstVert} $|G_{r,s}| = \sum_{i=1}^{r-2}s^{i} + 4s^{r-1} = \frac{s}{s-1}(4s^{r-1}-3s^{r-2}-1) \le 4\frac{s^{r}}{s-1}$.
        \item \label{prop:ConstEdges} $e(G_{r,s}) \leq  4(r-1)s^{r}$.
        \item \label{prop:ConstClassSize} The size of the largest two vertex classes is $2s^{r-1}$.
        \item \label{prop:SmallerClassSize} All other vertex classes have size at most $s^{r-2}$.
        \item \label{prop:ConstMatching} There is a matching between the largest two vertex classes of $G_{r,s}$.
        \item \label{prop:ConstIndSet} Any independent set in $G_{r, s}$ has at most $|G_{r,s}| - 2s^{r-1}$ vertices.
    \end{enumerate}
\end{prop}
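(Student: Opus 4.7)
My plan is to prove all ten properties by a single simultaneous induction on $r$, working with the more general family $G_{r, s_1, \ldots, s_{r-1}}$ and specializing to $G_{r,s} = G_{r, 2s, s, \ldots, s}$ only at the end for the quantitative claims. The key structural observation is a dichotomy inside $\widetilde{G_{t+1, s_1, \ldots, s_t}}$: each apex $x_p \in A_{t+1}$ is $\widetilde{G}$-adjacent exactly to $V(H_p)$, and every cross-copy pair $(u,v) \in V(H_p) \times V(H_q)$ with $p \neq q$ that sits in different classes among $A_1, \ldots, A_t$ is an edge of $\widetilde{G}$. This dichotomy drives every clique-finding and saturation argument below.

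Property (1) is immediate from the construction. For (2), any $K_r$ in $\widetilde{G}_r$ must hit every class; if it contains some $x_p$, then the remaining vertices must lie in $V(H_p)$ and produce a $K_{r-1}$ in $\widetilde{H_p}$, contradicting the inductive hypothesis. For (3) at level $r$, to avoid a class $A_i$ with $i < r$ I use $x_1 \in A_r$ together with a $K_{r-2}$ in $\widetilde{H_1}$ avoiding $A_i^1$ supplied by inductive (3) at level $r-1$; to avoid $A_r$ itself I take a $K_{r-2}$ in $\widetilde{H_1}$ avoiding $A_1^1$ (again by induction) and extend it by a vertex of $A_1^2$, which is adjacent to every vertex of the clique by the cross-copy part of the dichotomy.

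Property (4) is the core of the argument. An edge of $G_r$ is either of Type A (internal to some $H_p$) or Type B (of the form $x_p y$ with $y$ in a different copy $H_q$). For Type A, I apply the inductive (4) at level $r-1$ inside $H_p$ to obtain a $K_{r-3}$ in $\widetilde{H_p}$ adjacent to both endpoints, then extend it by $x_p$, which is $\widetilde{G}$-adjacent to all of $V(H_p)$; this gives the required $K_{r-2}$. For Type B, I apply the inductive (3) at level $r-1$ inside $H_p$ to obtain a $K_{r-2}$ in $\widetilde{H_p}$ that misses the class $A_j^p$ of $y$; then $x_p$ sees this clique because it lies in $V(H_p)$, and $y$ sees it because $y$ lies in a different copy and in a different class from each vertex of the clique. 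The main obstacle is precisely this interlocking: Type A demands the inductive (4), while Type B demands the inductive (3), so one is forced to carry both properties through a single simultaneous induction rather than proving them in sequence.

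The enumerative statements (5)--(10) then follow by direct computation from the two recursions $|G_{t+1}| = s_t|G_t| + s_t$ and $e(G_{t+1}) = s_t\,e(G_t) + s_t(s_t-1)|G_t|$, specialized to $s_1 = 2s$, $s_2 = \cdots = s_{r-1} = s$. The closed formula in (5) is a geometric-series calculation, and (6) follows by a short induction using the crude bound $|G_t| \le 4s^t/(s-1)$. The class sizes evolve as $|A_i| \mapsto s \cdot |A_i|$ at each stage while each new apex class has size $s$, which yields two largest classes of size $2s^{r-1}$ and the rest of size at most $s^{r-2}$, confirming (7) and (8). For (9), the perfect matchings between $A_1^p$ and $A_2^p$ in each copy are vertex-disjoint and their union is a perfect matching of size $2s^{r-1}$ between $A_1$ and $A_2$. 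Finally, (10) is immediate from (9): any independent set picks at most one vertex from each of the $2s^{r-1}$ matching edges, hence omits at least $2s^{r-1}$ vertices.
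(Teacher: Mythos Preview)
Your proof is correct and follows essentially the same inductive scheme as the paper's: the same dichotomy for edges of $G_{t+1}$, the same use of inductive Property~(3) to handle the ``apex'' edges in Property~(4), and the same recurrences for vertex and edge counts. The one genuine difference is your treatment of Property~(10): the paper argues by induction (each $H_p$ already forces an independent set to miss $2s^{r-1}$ vertices, and the $H_p$ are disjoint), whereas you deduce it directly from the perfect matching in Property~(9). Your argument is shorter and avoids the induction for this item, at the cost of relying on (9) rather than being self-contained; both are valid and yield the same bound.
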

\begin{proof}
    We shall use induction on $r$.
    The base case $r=2$ is trivial.
    Suppose the assertions hold for $r \ge 2$. 
    Clearly $G_{r+1, s}$ is $(r+1)$-partite and $\widetilde{G}_{r+1,s}$ is $K_{r+1}$-free.
    To show Part~\ref{prop:ConstrKr-1}, suppose first that $i = r+1$.
    By induction hypothesis there is a copy of $K_{r-1}$ in $\widetilde{H}_{1} \setminus A_{1} = \widetilde{H}_{1} \setminus A_{1}^{1}$ which together with any $x \in \widetilde{H}_{2} \cap A_{1}$ form a copy of $K_{r}$ in $\widetilde{G}_{r+1, s} \setminus A_{r+1}$.
    The argument is very similar for the case when $i \in [r]$.
    To show Part~\ref{prop:ConstrSat}, notice that the only edges between vertex classes in $G_{r+1, s}$ are either inside $H_{p}$ or between $x_{p}$ and $H_{q}$, for some $p, q \in [s]$, $p \neq q$.
    If we add an edge to $\widetilde{G}_{r+1,s}$ (which corresponds to removing that edge from $G_{r+1, s}$) of the former type, the assertion holds simply by induction.
    If we add an edge $x_py$ with $y \in A_{i}$, $i \in [r]$, of the latter type, first observe that it follows from Part~\ref{prop:ConstrKr-1} of the induction hypothesis that $\widetilde{H}_{p} \setminus A_{i}$ contains a copy of $K_{r-1}$, say $K$.
    Hence, both $x_{p}$ and $y$ are joined to every vertex in $K$, thus forming a $K_{r+1}$ in $\widetilde{G}_{r+1,s}$.  
    The number of vertices satisfies the relation $|G_{r+1,s}| = s + s|G_{r, s}|$ while $|G_{2,s}| = 4s$, and thus the claim follows. 
    The number of edges satisfies the recurrence $e(G_{r+1, s}) = s\cdot e(G_{r,s}) + s(s-1)|G_{r, s}| \le s\cdot e(G_{r,s}) + s(s-1)4\frac{s^{r}}{s-1} = s\cdot e(G_{r,s}) + 4s^{r+1}$ so, by induction, $e(G_{r+1, s}) \leq 4(r-1)s^{r+1} + 4s^{r+1} = 4rs^{r+1}$. Parts~\ref{prop:ConstClassSize},\ref{prop:SmallerClassSize},\ref{prop:ConstMatching} follow immediately by induction. 
    Finally, to argue Part~\ref{prop:ConstIndSet}, simply notice that for each $p \in [s]$, by induction, there is no independent set in $H_{p}$ with more than $|H_{p}| - 2s^{r-1}$ vertices. 
    Therefore, from disjointness of the $H_{p}$'s, any independent set in $G_{r+1,s}$ has at most $|G_{r+1,s}| - 2s^{r}$ vertices.
\end{proof}

\subsection{The final construction}\label{subsec:final_construction}
We can now proceed to construct a family of graphs $H_{r, s, t}(n)$ that will demonstrate the tightness of Theorem~\ref{thm2}.
We let $H_1,\ldots,H_t$ be vertex disjoint copies of $G_{r,s}$ with vertex partitions $H_p = A_1^p \cup \cdots \cup A_r^p$ for each $p \in [t]$. 
    We now augment the vertex set of the $H_p$'s to be the vertex set for our $G$. 
    First note that since $n \geq 4s^{r-1} t r + t$, we can find $\ell_1,\ldots,\ell_r \in \mathbb{N}$, so that for each $i \in [r]$ we have $\sum_{p=1}^{t} |A^p_i| + \ell_i \in \left\lbrace \lfloor \frac{n-t}{r} \rfloor, \lceil \frac{n-t}{r} \rceil \right\rbrace$ and $\sum_{i=1}^{r} \left(\sum_{p=1}^{t} |A^p_i| + \ell_i\right) =n-t$. 
    Note that as $n$ is large enough, we may assume that $\ell_1, \ldots, \ell_r >0$. 
    We now define the sets $A_1,\ldots,A_r$ as
    \[ 
        A_i = A^1_i \cup \cdots \cup A^t_i \cup Y_i, 
    \] 
    for $i \in [r]$, where $Y_i$ is a collection of $\ell_i$ new vertices.
    We additionally define $A_{r+1} = \{x_1,\ldots,x_t\}$ as a collection of $t$ new vertices and finally set $V(G)  = \bigcup_{i=1}^{r+1} A_i$. 

    We define the edge set as follows: the vertex $x_p$ is joined to $V(H_p)$, for each $p \in [t]$, and for $i,j \in [r]$, $x \in A_i, y \in A_j$, $xy$ is an edge if and only if $i \not= j$ and the edge $xy$ is \emph{not} in any of the graphs $H_1,\ldots,H_t$. 
    We then add a maximal set of edges among $A_{r+1}$ that leaves the graph $K_{r+1}$-free.
    That is, we first define a graph $G'$ by $V(G') = V(G)$ and
    \[
        E(G') = \{ x_py : y \in V(H_p), p \in [t] \} \cup \left\lbrace xy : x \in A_i, y \in A_j \, , 1\leq i < j \leq r \right\rbrace \setminus \bigcup_{p = 1}^t E(H_p), 
    \]
    and then augment the edge set to form $E(G)$:
    \[ 
        E(G) = E(G') \cup X ,
    \] 
    where $X \subseteq A_{r+1}^{(2)}$ is maximal in the sense that adding any further edge of $A_{r+1}^{(2)}$ will yield a $K_{r+1}$ in $G$.
    Call this final graph $H_{r, s, t}(n)$.
   
   The following Proposition shows that  $H_{r, s, t}(n)$ has all of the properties that are of interest to us. 
    Before proceeding, let us note the following easy observation.
    
    \begin{observation}\label{obs:turan}
     For integers $r, t \le n$ with $r \ge 2$ we have
    \[
    	t_r(n - t) \ge t_r(n) - (1- 1/r)tn.
	\]
\end{observation}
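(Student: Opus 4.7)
The plan is to lower-bound $t_r(n-t)$ by explicitly constructing an $r$-partite graph on $n-t$ vertices with many edges. Starting from the Tur\'{a}n graph $T_r(n)$, I will delete $t$ vertices one at a time, tracking the number of edges lost at each step. Since the resulting graph is $r$-partite, hence $K_{r+1}$-free, on $n-t$ vertices, it contains at most $t_r(n-t)$ edges, and this will immediately yield the inequality.

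The key ingredient is the elementary bound $t_r(m) \le (1 - 1/r)m^2/2$, which by the handshake identity implies that every $r$-partite graph on $m$ vertices contains a vertex of degree at most $(1 - 1/r)m$. I will therefore set $G_0 = T_r(n)$ and, inductively, let $G_{i+1}$ be obtained from $G_i$ by removing a vertex of minimum degree; since each $G_i$ is $r$-partite on $n-i$ vertices, this operation removes at most $(1 - 1/r)(n-i)$ edges. Iterating for $i = 0, 1, \dots, t-1$ and then using that $G_t$ is an $r$-partite graph on $n-t$ vertices yields
\[
    t_r(n - t) \, \ge \, e(G_t) \, \ge \, t_r(n) - (1 - 1/r)\sum_{i=0}^{t-1}(n - i) \, \ge \, t_r(n) - \left(1 - \tfrac{1}{r}\right)tn,
\]
which is precisely the claimed inequality.

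There is no real obstacle here; the only point to verify at each step is that vertex deletion preserves $r$-partiteness, so that the average-degree bound continues to apply to every $G_i$. Alternatively, one could bypass the greedy removal by directly comparing the part sizes of $T_r(n)$ and $T_r(n-t)$, but the greedy route is cleaner and avoids an uninformative floor/ceiling computation.
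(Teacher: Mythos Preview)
Your argument is correct and follows essentially the same greedy vertex-removal strategy as the paper. The only minor difference is that the paper observes directly that deleting a minimum-degree vertex from $T_r(m)$ yields $T_r(m-1)$, so each $G_i$ is again a Tur\'{a}n graph and one obtains the exact identity $t_r(n) = t_r(n-t) + \sum_{j=0}^{t-1}\delta(T_r(n-j))$, whereas you keep $G_i$ merely $r$-partite and cap $e(G_t)$ by $t_r(n-t)$ at the end; both routes give the same bound.
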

	\begin{proof}
	If $x$ is a vertex of minimum degree in $T_r(n)$, then $T_r(n) - x = T_r(n-1)$ and so $t_r(n) = t_r(n-1) + \delta(T_r(n))$.
	Iterating this fact yields
	\[
		t_r(n) = t_r(n-t) + \sum_{j=0}^{t-1}\delta(T_r(n-j)) \le t_r(n-t) + t\cdot\delta(T_r(n)) \le t_r(n-t) + (1 - 1/r)tn,
	\]
	as claimed.
	\end{proof}
	
	\begin{prop} \label{main_construction}  
    Suppose that $n,r,s,t \in \mathbb{N}$ with $r,s \geq 2 $ satisfy $n \geq 4s^{r-1}tr + t$. 
    Then there exists an $(r+1)$-saturated graph $G$ on $n$ vertices with $e(G) \geq t_r(n) - \frac{r-1}{r}tn - 4(r-1)ts^r$ such that any complete $r$-partite subgraph has at most $n - 2ts^{r-1}$ vertices.
\end{prop}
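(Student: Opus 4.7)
The plan is to verify four things about $G := H_{r,s,t}(n)$: (i) $|V(G)| = n$, which is immediate from the choice of the $\ell_i$'s; (ii) $G$ is $(r+1)$-saturated; (iii) $e(G) \geq t_r(n) - \tfrac{r-1}{r}tn - 4(r-1)ts^r$; and (iv) every complete $r$-partite subgraph has at most $n - 2ts^{r-1}$ vertices.

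Point (iii) is short: $G$ restricted to $A_1 \cup \cdots \cup A_r$ is the complete $r$-partite graph on the nearly-balanced classes $A_i$ with the edges of $\bigcup_p E(H_p)$ removed, so by Property~\ref{prop:ConstEdges} it has at least $t_r(n-t) - 4(r-1)ts^r$ edges, and Observation~\ref{obs:turan} yields the claimed bound. For (ii), $G$ is $K_{r+1}$-free because $G[V(H_p)] = \widetilde{H}_p$ is $K_r$-free by Property~\ref{prop:ConstructionKrFree}, the only vertex of $A_{r+1}$ adjacent to $V(H_p)$ is $x_p$, and $X$ is chosen maximally. Saturation is then checked by running through the non-edge types: non-edges in $A_{r+1}$ are handled by maximality of $X$; non-edges between $A_i$ and $A_j$, $i,j \in [r]$, lie in some $E(H_p)$, so by Property~\ref{prop:ConstrSat} their addition creates a $K_r$ in $\widetilde{H}_p$ that together with $x_p$ gives a $K_{r+1}$; non-edges within a single class $A_i$ are handled by picking one vertex from each $Y_j$, $j \neq i$, to form an $(r-1)$-clique in $G$ joined to both endpoints; and non-edges $x_p y$ with $y \notin V(H_p)$ are handled by using Property~\ref{prop:ConstrKr-1} to find a $K_{r-1}$ in $\widetilde{H}_p \setminus A_{i(y)}$ (where $y \in A_{i(y)}$) that joins $x_p$ and $y$ into a $K_{r+1}$.

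Part (iv) is the heart of the argument. Let $V' \subseteq V(G)$ with $G[V']$ complete $r$-partite, with parts $B_1, \ldots, B_r$; fix $p$ and set $U_p = V' \cap V(H_p)$. Since $G[U_p] \subseteq \widetilde{H}_p$ is $K_r$-free yet complete multipartite, it has at most $r-1$ nonempty parts, so vertices of $U_p$ inside a single $A_a^p$ must lie in a common $B_i$; this defines a function $\phi(a)$ whenever $U_p \cap A_a^p \neq \emptyset$. Let $M_p$ be the perfect matching of size $2s^{r-1}$ between the two largest classes $A_1^p, A_2^p$ furnished by Property~\ref{prop:ConstMatching}. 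If $U_p \cap A_1^p$ or $U_p \cap A_2^p$ is empty, Property~\ref{prop:ConstClassSize} gives $|V(H_p) \setminus V'| \geq 2s^{r-1}$ directly. If $\phi(1) \neq \phi(2)$, then no matching edge can have both endpoints in $U_p$ — otherwise those endpoints would lie in different $B$-parts and be forced to be adjacent in $G$, contradicting $uv \in E(H_p)$ — which again gives $|V(H_p) \setminus V'| \geq 2s^{r-1}$. If $\phi(1) = \phi(2) = i_0$, then $U_p \cap (A_1^p \cup A_2^p) \subseteq B_{i_0}$ is independent in $G$, forcing every cross-pair to be an $H_p$-edge; for $r \geq 3$ I invoke the recursive construction so that $H_p[A_1^p, A_2^p]$ decomposes as a disjoint union of $s$ copies of $G_{r-1,s}[A_1, A_2]$, hence both $B_{i_0} \cap A_1^p$ and $B_{i_0} \cap A_2^p$ must lie in a single common sub-copy of size at most $2s^{r-2}$, yielding $|V(H_p) \setminus V'| \geq 4s^{r-1} - 4s^{r-2} \geq 2s^{r-1}$ for $s \geq 2$; summing over $p$ using the disjointness of the $V(H_p)$'s gives $|V'| \leq n - 2ts^{r-1}$. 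For $r = 2$ the recursion is not available, but any two vertices $u \in B_{i_0} \cap A_1^p$ and $v \in B_{i_0} \cap A_2^p$ have common neighbourhood exactly $\{x_p\}$ in $G$, forcing $B_{3-i_0} = \{x_p\}$ and $V' \subseteq V(H_p) \cup A_{r+1}$ of total size at most $4s + t$; the hypothesis $n \geq 8st + t$ then converts this directly into $|V'| \leq n - 2ts$.

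The main obstacle is precisely this last sub-case: extracting from either the recursive block structure of $G_{r,s}$ (for $r \geq 3$) or the global structure of $G$ (for $r = 2$) the fact that a ``colour collapse'' on the two largest classes forces either a large local deficit in $V(H_p)$ or a tight global cap on $|V'|$. Everything else is structural bookkeeping or a direct appeal to the properties of $G_{r,s}$ collected in Proposition~\ref{prop_construction}.
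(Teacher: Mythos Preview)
Your proof is correct, and for the edge count and the saturation check (points (ii) and (iii)) it is essentially the same as the paper's.

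For point (iv) you take a genuinely different and more careful route than the paper. The paper dispatches (iv) in one line: it asserts that at most $|H_p| - 2s^{r-1}$ vertices of $V(H_p)$ can lie in a complete $r$-partite subgraph of $G$, citing the independent-set bound (Property~\ref{prop:ConstIndSet}). This implicitly treats $V' \cap V(H_p)$ as an independent set in $H_p$, which is exactly the point your Case~3 identifies as non-automatic: if two $A$-classes of $H_p$ collapse into the same $B$-part, then $V'\cap V(H_p)$ contains an $H_p$-edge and is \emph{not} independent in $H_p$. Your case analysis handles this honestly. For $r\ge 3$ you recover the local deficit $|V(H_p)\setminus V'|\ge 2s^{r-1}$ by using the block decomposition of $H_p[A_1^p,A_2^p]$ into $s$ vertex-disjoint copies of $G_{r-1,s}[A_1,A_2]$ (so any biclique in $H_p[A_1^p,A_2^p]$ is trapped in one block of size $\le 4s^{r-2}$). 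For $r=2$ the local deficit can genuinely fail --- one can take $B_{i_0}\supseteq V(H_p)$ --- and you correctly switch to a global bound via the common-neighbourhood argument together with the hypothesis $n\ge 8st+t$. Two small polishing points: (a) your sentence ``forcing $B_{3-i_0}=\{x_p\}$'' should read $B_{3-i_0}\subseteq\{x_p\}$, since the other part could be empty; your subsequent bound $|V'|\le 4s+t$ still goes through, because $B_{i_0}$ is then an independent set in $G$ containing a vertex of $A_1^p$ and of $A_2^p$, which forces $B_{i_0}\cap(A_1\cup A_2)\subseteq V(H_p)$; and (b) the clause ``has at most $r-1$ nonempty parts, so vertices of $U_p$ inside a single $A_a^p$ must lie in a common $B_i$'' conflates two separate facts --- the well-definedness of $\phi$ follows directly from $A_a^p$ being independent in $G$, not from the $K_r$-freeness. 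Neither affects correctness.
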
 
\begin{proof}
    Let $G = H_{r, s, t}(n)$. We see that $G$ satisfies
    \begin{align*}
        e(G) \geq t_{r}(n-t) - t\cdot e(G_{r, s}) &\ge t_r(n) - \frac{r-1}{r}tn - t \cdot e(G_{r,s})\\ &\geq t_r(n) - \frac{r-1}{r}tn - 4(r-1)ts^r,
    \end{align*}
	where in the second inequality we have used Observation~\ref{obs:turan}.
    We first note that any complete $r$-partite subgraph is of order at most $n - 2ts^{r-1}$, as for each $p \in [t]$, at most $|H_p| - 2s^{r-1}$ vertices from $V(H_p)$ can be included in a complete $r$-partite subgraph of $G$, by Part~\ref{prop:ConstIndSet} of Proposition~\ref{prop_construction}.

    To see that $G$ is $(r+1)$-saturated we may argue as we did in the proof of Proposition~\ref{prop_construction}. 
    First, notice that $G$ is $K_{r+1}$-free.
    Indeed, if there were a copy of $K_{r+1}$ in $G$ then, by construction, it would contain exactly one vertex from $A_{r+1}$, say $x_{p}$ for some $p \in [t]$.
    Since the neighbourhood of $x_{p}$ outside $A_{r+1}$ is exactly $H_{p}$, which is $K_{r}$-free, it follows that $x_{p}$ cannot be contained in any copy of $K_{r+1}$, which yields a contradiction.
    There are only three types of edges that one could add to $G$: edges from $E(H_p)$, for some $p \in [t]$, edges between $A_{r+1}$ and one of the $A_i$, $i \in [r]$, and edges within a vertex class. 
    Note that the first option must create a $K_{r}$ by Proposition~\ref{prop_construction}, which then extends to a $K_{r+1}$ when we include $x_p$. 
    If we add an edge $x_py$, for some $y \in A_{i}$, $p \in [t]$, $i \in [r]$, first notice that by Part~\ref{prop:ConstrKr-1} of Proposition~\ref{prop_construction} we may choose a copy of $K_{r-1}$, say $K$, in the graph induced on $V(H_p) \setminus A_{i}$.
    We then form a $K_{r+1}$ by observing that $x_p$ and $y$ are joined to all of $K$. 
    If we add an edge within one of the classes $A_1,\ldots,A_r$, then we find a $K_{r-1}$ among $Y_1,\ldots,Y_r$ that does not intersect the class that contains the added edge.
    Clearly this $K_{r-1}$ is in the common neighbourhood of both points of the added edge and hence we extend to a $K_{r+1}$.
    Adding an edge within $A_{r+1}$ guarantees a $K_{r+1}$ by the construction of $G$.
\end{proof}

By choosing $s$ and $t$ appropriately, we arrive at the following.

\begin{theorem}\label{thm: construction}
	Let $r \ge 2$ be an integer and let $\varepsilon > 0$. Then 
    there exist $n_0, b_0, c_0 > 0$ which are constants depending on $r$ and $\varepsilon$ such that the following holds.
    Let $n \in \mathbb{N}$ and $m > 0$ be such that $n \geq n_0$ and $(\frac{r-1}{r} + \varepsilon)n \le m \le b_0n^{\frac{r+1}{r}}$.
    Then there exists an $(r+1)$-saturated graph $G$ on $n$ vertices and $e(G) \geq t_r(n) - m$, with no complete $r$-partite subgraph on more than $\left(1 - c_0mn^{-\frac{r+1}{r}}\right)n$ vertices.
\end{theorem}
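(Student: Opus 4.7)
The plan is straightforward: invoke Proposition~\ref{main_construction} with carefully chosen parameters $s$ and $t$. The heuristic is to use the slack in the hypothesis $m \geq (\frac{r-1}{r} + \varepsilon)n$ to absorb the parasitic term $4(r-1)ts^r$ in the edge deficit, while taking $s$ as large as permitted (of order $n^{1/r}$) so as to maximise the quantity $2ts^{r-1}$ which controls the complete $r$-partite subgraph.

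Concretely, I would set $\alpha = (\varepsilon/(4(r-1)))^{1/r}$ and then define $s = \lfloor \alpha n^{1/r} \rfloor$ and $t = \lfloor rm/((r-1)(1 + 4r\alpha^r)n) \rfloor$. The choice of $\alpha$ is engineered so that $\frac{r-1}{r}(1 + 4r\alpha^r) = \frac{r-1}{r} + \varepsilon$, which together with $m \geq (\frac{r-1}{r} + \varepsilon)n$ guarantees $t \geq 1$; and $s \geq 2$ for all $n \geq n_0(r, \varepsilon)$. With these choices one computes
\[
\tfrac{r-1}{r}tn + 4(r-1)ts^r \;\leq\; t \cdot \tfrac{r-1}{r}(1 + 4r\alpha^r) n \;\leq\; m,
\]
exactly what is needed for Proposition~\ref{main_construction} to deliver an $(r+1)$-saturated graph $G$ on $n$ vertices with $e(G) \geq t_r(n) - m$, provided the technical hypothesis $n \geq 4s^{r-1}tr + t$ holds. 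This last condition translates, via the elementary bounds $t \leq rm/((r-1)n)$ and $s^{r-1} \leq \alpha^{r-1} n^{(r-1)/r}$, into an inequality of the form $m \leq b_0\, n^{(r+1)/r}$ for an appropriate $b_0 = b_0(r, \varepsilon) > 0$, which is precisely our upper bound on $m$.

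Finally, Proposition~\ref{main_construction} tells us that every complete $r$-partite subgraph of $G$ has at most $n - 2ts^{r-1}$ vertices. Using $s \geq \frac{1}{2}\alpha n^{1/r}$ and, since $m/n$ is bounded below by a constant depending only on $r, \varepsilon$, $t \geq c(r, \varepsilon)\, m/n$, we get $2ts^{r-1} \geq c_0\, m\, n^{-1/r}$ for some constant $c_0 = c_0(r, \varepsilon) > 0$. Hence every complete $r$-partite subgraph has at most $n - c_0 m n^{-1/r} = (1 - c_0 m n^{-(r+1)/r})n$ vertices, as required.

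I do not expect any genuine obstacle beyond bookkeeping: the whole argument is a routine optimisation within the inequalities supplied by Proposition~\ref{main_construction}. The only point requiring a little care is the simultaneous verification that $s \geq 2$, $t \geq 1$, and $n \geq 4s^{r-1}tr + t$ all hold throughout the claimed range of $m$, which forces the quantitative choices of $n_0, b_0, c_0$ in terms of $r$ and $\varepsilon$.
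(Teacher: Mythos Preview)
Your proposal is correct and follows essentially the same argument as the paper's proof: with $\alpha^r$ playing the role of the paper's constant $c = \varepsilon/(4(r-1))$, your choices of $s$ and $t$ coincide exactly with theirs, and the verification of the edge bound, the hypothesis $n \geq 4s^{r-1}tr + t$, and the lower bound on $2ts^{r-1}$ all proceed identically.
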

\begin{proof}
    Fix any $\varepsilon > 0$ and let $c' = \left( \frac{r-1}{r}+\varepsilon \right)^{-1}$, $c = \varepsilon /4(r-1)$.
    We set $s = \left\lfloor \left(cn\right)^{\frac{1}{r}} \right\rfloor$ and $t = \left\lfloor c'mn^{-1} \right\rfloor$ in Proposition~\ref{main_construction}. 
    Observe that $t \ge 1$, and as long as $ n \ge n_0  \ge \frac{2^{r}}{c}$, then $s \ge 2$.
    It is easy to check that for $b_{0} \le \left(8c'c^{\frac{r-1}{r}}r\right)^{-1}$ the condition $n \geq 4s^{r-1}tr + t$ holds for this choice of $s$ and $t$. 
    Indeed, we have:
    \begin{align*}
        4s^{r-1}tr + t 
        &\le 8s^{r-1}tr \le 8 \left(c n\right)^{\frac{r-1}{r}}rc'mn^{-1} \le 8(c n)^{\frac{r-1}{r}}rc'b_{0}n^{\frac{r+1}{r}}n^{-1} \\
        &= 8c'c^{\frac{r-1}{r}}rb_{0}n \le n,
    \end{align*}
    where the penultimate inequality follows from the assumption that $m \le b_{0}n^{\frac{r+1}{r}}$.
    
    Let $G$ be as in the conclusion of Proposition~\ref{main_construction}.
    It follows that
    \begin{align*}
        e(G) 
        &\ge t_{r}(n) - \frac{r-1}{r}tn - 4(r-1)ts^{r} \geq t_{r}(n) - \frac{r-1}{r}tn - 4(r-1)c tn\\
        &\ge t_{r}(n) - tn\left( \frac{r-1}{r}+4(r-1)c \right) \ge t_{r}(n) - m.
    \end{align*}

    To finish the proof, notice that $t \ge \frac{c'mn^{-1}}{2}$ and $s \ge \frac{(c n)^{\frac{1}{r}}}{2}$.
    Therefore we have
    \begin{align*}
        g_{r}(G) \ge 2ts^{r-1} \ge 2\frac{c'mn^{-1}}{2}\frac{(c n)^{\frac{r-1}{r}}}{2^{r-1}} \ge \frac{c'c^{\frac{r-1}{r}}}{2^{r-1}}mn^{-\frac{1}{r}}.
   \end{align*}
   Hence we have that there is no complete $r$-partite subgraph on more than $(1-c_0mn^{-\frac{r+1}{r}})n$ vertices, where $c_0 = \frac{c'c^{\frac{r-1}{r}}}{2^{r-1}}$. 
    
    
\end{proof}

\section{Beyond the threshold: $(r+1)$-saturated graphs on $t_r(n) - O(n^{\frac{r+1}{r}})$ edges}\label{balanced}

If $G$ is an $(r+1)$-saturated graph with $t_r(n)-o(n^{\frac{r+1}{r}})$ edges, then Theorem~\ref{thm:MainTheoremSoftForm} tells us that $G$ has a complete $r$-partite subgraph $G' = V_1 \cup \dots \cup V_r$ on $(1- o(1))n$ vertices. It is easy to see that no two classes $V_i, V_j$ can differ by more than $o(n)$ vertices (otherwise, there would be too few edges in $G$), and so we may remove at most $o(n)$ vertices to make $G'$ a $r$-partite Tur\'{a}n graph. In other words,
there is little quantitative difference between the maximum sized $r$-partite Tur\'{a}n subgraph and the maximum sized complete $r$-partite subgraph in the edge regime $t_r(n)-o(n^{\frac{r+1}{r}})$. However, if $e(G) = t_r(n) - O(n^{\frac{r+1}{r}})$ the difference between these two problems becomes relevant, and we find it most natural to restrict our attention to \emph{balanced} complete $r$-partite subgraphs or, equivalently, $r$-partite Tur\'{a}n graphs.
\paragraph{}
Recall that for $n,m \in \mathbb{N}$, the quantity $g^*_r(n,m)$ is the maximum number vertices that one must remove from an $(r+1)$-saturated graph on $t_r(n) - m$ edges so that the remaining graph is an $r$-partite Tur\'{a}n graph. In this section, we show that, for $C$ sufficiently large compared to $r$, we have  
\[ g_r^*(n,Cn^{\frac{r+1}{r}}) \geq \left(1-\frac{c' \log(Cr)}{C}\right)n,
\] for an absolute constant $c'$ and sufficiently large $n$. In other words, the vertex set of the largest $r$-partite Tur\'{a}n subgraph can cover an arbitrarily small fraction of the vertices in the edge range $e(G) =  t_r(n) - O(n^{\frac{r+1}{r}})$. We remind the reader of the statement of Theorem~\ref{thm:RandomConstruction} for convenience.
\begingroup
\def\thetheorem{\ref{thm:RandomConstruction}}
\begin{theorem}
Let $r \geq 2 $ be an integer and let $\delta > 0$. There exists a constant $C = C(r,\delta)$ such that, for $n$ sufficiently large, there exists an $n$-vertex $(r+1)$-saturated graph $G$ that contains no $T_r(\delta r n)$ and $e(G) \geq t_r(n) - Cn^{\frac{r+1}{r}}$. In terms of the function $g_r^*$, we show that, for sufficiently large $D >0$, we have
\[ g_r^*(n,Dn^{\frac{r+1}{r}}) \geq \left(1-\frac{c'\log(Dr)}{D}\right)n , \] for sufficiently large $n$.
\end{theorem}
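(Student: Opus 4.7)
The plan is to prove Theorem~\ref{thm:RandomConstruction} by an explicit random construction obtained by modifying the Turán graph $T_r(n)$ in two complementary ways. First, one deletes cross-edges between parts to make each bipartite subgraph $K_{\delta n,\delta n}$-free, which already suffices to kill every $T_r(\delta r n)$ subgraph. Second, one plants a small structured collection of inside edges to upgrade the resulting graph to an $(r+1)$-saturated one, while deleting a few additional cross-edges locally to keep it $K_{r+1}$-free.

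For the first step, partition $[n]$ into balanced classes $V_1,\dots,V_r$ and, between each pair of classes, delete each edge independently with probability $q = \Theta(\log(1/\delta)/(\delta n))$. A standard first-moment computation gives
\[
    \mathbb{E}\bigl[\#\,K_{\delta n,\delta n}\text{-copies in some }G_0[V_i,V_j]\bigr] \,\le\, \binom{r}{2}\binom{n/r}{\delta n}^{2}(1-q)^{(\delta n)^{2}} = o(1),
\]
so with high probability every bipartite graph $G_0[V_i,V_j]$ is $K_{\delta n,\delta n}$-free. Moreover, the sparse set $D_{ij}$ of deleted cross-edges has only $\sim q(n/r)^{2} = O(n\log(1/\delta)/\delta)$ edges, so a Kővári–Sós–Turán style argument shows that no independent set of $G_0$ of size $\delta n$ can straddle more than one class $V_i$. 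Since the parts of any $T_r(\delta r n)$ subgraph are independent sets, they must each lie inside a single $V_i$, and then the $K_{\delta n,\delta n}$-free condition between pairs rules out any such subgraph.

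For the second step, to upgrade the $r$-partite $G_0$ to an $(r+1)$-saturated graph, every cross non-edge $uv \in V_i\times V_j$ must acquire a saturating $K_{r-1}$ in $N(u)\cap N(v)$. Because $G_0$ is $r$-partite, such a $K_{r-1}$ must use two vertices of some class $V_k$ ($k\neq i,j$) joined by an inside edge, together with one vertex from each of the remaining $r-3$ classes. The plan is therefore to plant $\Theta(n^{1/r})$ inside edges in each class $V_k$; a simple union bound shows that for every cross non-edge $uv$ at least one planted inside edge $w_1w_2 \in V_k$ lies in $N(u)\cap N(v)$, so that adding $uv$ creates a $K_{r+1}$. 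To keep $G$ itself $K_{r+1}$-free, each inside edge $w_1w_2$ is accompanied by deleting all cross-edges from $w_1$ (say) to some chosen class $V_\ell$ ($\ell\neq k$); this destroys every potential $K_{r+1}$ containing $\{w_1,w_2\}$ at a cost of $n/r$ further deleted cross-edges per inside edge. The total saturation cost is $\Theta(n^{1/r})\cdot (n/r) = \Theta(n^{(r+1)/r})$, exactly within the budget.

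Combining the two stages yields an $(r+1)$-saturated graph $G$ with $e(G)\ge t_r(n) - C(r,\delta)\,n^{(r+1)/r}$ and no $T_r(\delta r n)$ subgraph. The quantitative form $g_r^*(n, Dn^{(r+1)/r}) \geq (1 - c'\log(Dr)/D)n$ then follows by tracking the dependence of $C(r,\delta)$ on $\delta$ and setting $\delta = \Theta(\log(Dr)/(Dr))$. The main obstacle is the boundary case $r=2$: there is no third class $V_k$ in which to place the saturating inside edge, so one must instead place inside edges in $V_i$ or $V_j$ themselves, making the inside-edge placement and its accompanying cross-deletions couple to the pair $(u,v)$ directly. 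Handling this case cleanly, while keeping the total deficit at $O(n^{(r+1)/r})$, is the most delicate technical ingredient.
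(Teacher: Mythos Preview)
Your two-stage plan is natural, but the second stage does not achieve saturation, and the failure is not confined to $r=2$.

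Take any randomly deleted cross non-edge $uv$ with $u\in V_i$, $v\in V_j$, where $u$ and $v$ are generic (not endpoints of planted inside edges). For saturation you need a $K_{r-1}$ in $N(u)\cap N(v)$; since $N(u)\cap N(v)$ lies in the $r-2$ classes $V_m$ with $m\neq i,j$, any such clique must use at least one planted inside edge $w_1w_2\subset V_k$. But you have isolated $w_1$ from some $V_\ell$ with $\ell\neq k$. If $\ell\in\{i,j\}$ then $w_1\notin N(u)\cap N(v)$; if $\ell\notin\{i,j\}$ then the $K_{r-1}$ must pick a vertex of $V_\ell$ adjacent to $w_1$, and there is none. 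Either way $uv$ is not $(r+1)$-saturating in your graph. Worse, if one simply restores all the randomly deleted cross-edges (keeping the inside edges and the isolation deletions), the same isolation argument shows the resulting graph is still $K_{r+1}$-free, and it visibly contains a nearly spanning $T_r$. So you have no guarantee that passing to a maximal $K_{r+1}$-free supergraph preserves the absence of $T_r(\delta rn)$; the ``simple union bound'' you invoke is attempting to certify something that is actually false.

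The paper dissolves exactly this tension by replacing inside edges with an extra $(r{+}1)$st class $V_{r+1}=\{x_1,\dots,x_t\}$, $t=\Theta(n^{1/r})$. For each $p$ one deletes from the first $r$ parts a copy of the structured auxiliary graph $G_{r,s}$ (from the constructions section) and joins $x_p$ to precisely that copy. The key properties of $G_{r,s}$ are that its $r$-partite complement is $K_r$-free (so the neighbourhood of $x_p$ is $K_r$-free, keeping the whole graph $K_{r+1}$-free) and that every edge of $G_{r,s}$ is $r$-saturating in that complement (so each deleted cross-edge becomes $(r+1)$-saturating via $x_p$). Only the two largest parts of each $G_{r,s}$ are placed randomly, inside $V_1$ and $V_2$; a first-moment bound using their built-in perfect matching kills every $K_{\delta n/2,\delta n/2}$ between $V_1$ and $V_2$. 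Because those deleted $(V_1,V_2)$-edges are already saturating, the final greedy saturation cannot reinstate them, and the no-$T_r(\delta rn)$ conclusion survives. Any repair of your scheme needs an analogous gadget that simultaneously keeps the graph $K_{r+1}$-free and makes the deleted cross-edges saturating \emph{before} one takes the maximal closure; the inside-edge-plus-isolation device does not do this for any $r\ge 3$.
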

\endgroup
\begin{proof}

Fix $\delta \in (0, 1)$ and choose $C(\delta) = 2^6 r^{-1}\delta^{-1} \log(2e/\delta) = 4rB(\delta)$, where we have set $B(\delta) = 16 r^{-2}\delta^{-1}\log(2e/\delta) $. With foresight, we select $s = n^{\frac{1}{r}}$, $t = B(\delta)n^{\frac{1}{r}}$ and note that for large enough $n$ we have $\frac{\delta}{4} n > 2s^{r-1}$. 

We build our desired graph $G$ in three stages. We start by defining our first stage graph $G_{I}$. Let $T_r(n-t)$ be the Tur\'{a}n graph on $n-t$ vertices with vertex classes $V_1,\ldots,V_r$ and let $V_{r+1} = \{x_1,\ldots,x_t\}$ be a set of vertices disjoint from $V(T_r(n-t))$. Define $V(G_{I}) = V(T_r(n-t)) \cup V_{r+1}$ and $E(G_{I}) = E(T_r(n-t))$. In the second stage, we use a probabilistic construction to form the graph $G_{II}$ by removing edges between the classes $V_i,V_j$, where $i,j  \in [r]$, and adding edges between the classes $V_{r+1},V_i$, $i \in [r]$. After this second stage we will almost be finished: $G_{II}$ will be a $K_{r+1}$-free graph with many edges; $G_{II}$ will not contain a $T_r(\delta rn)$; and adding non-saturating edges to $G_{II}$ will not ruin these properties. In the final stage we augment $G_{II}$ by choosing an arbitrary maximal $K_{r+1}$-free graph which contains $G_{II}$. This will serve as our final graph $G$.

We now prepare for the second stage. For each $i \in [r]$, fix a vertex $v_i \in V_i$ and then define $V'_i = V_i \setminus \{ v_i \}$. The edges incident to the vertices $v_1,\ldots,v_r$ will go unaltered throughout this construction. This is to ensure that the addition of any edge within any of the classes $V_1,\ldots,V_r$ creates a $K_{r+1}$ with $v_1,\ldots,v_r$, even after the edge deletions in stage II. We now define an auxiliary graph $H$ on $V'_1,\ldots,V'_r$ which records edges that we shall delete from $T_r(n-t)$ to form $G_{II}$. 

For $p \in [t]$, let $H^p$ be a copy of the $r$-partite graph $G_{r,s}$, as defined in Section~\ref{section_constructions}, where we think of the vertex sets of the $H^p$ as being disjoint and $H^p_1,H^p_2$ as being the two largest vertex classes (each of order $2s^{r-1}$) in the vertex partition $H^p_1, \ldots , H^p_r$.

We shall randomly embed each $H^p$ into $V'_1 \cup \cdots \cup V'_r$ in a manner that respects the partition $V'_1,\ldots,V'_r$. 
To this end, we define a probability space on tuples of injections $(f_1,\ldots,f_t)$ with $f_p : V(H^p) \rightarrow \bigcup_{i=1}^{r} V'_i$.
We choose each $f_p$ so that $\{f_p(H^p_{i})\}_p$ are (fixed) vertex disjoint sets for each $3 \leq i \leq r$, while for $i\in \{1,2\}$, $f_p(H^p_i)$ is a uniform random subset of $V_i'$ of size $|H^p_i|$ and each $\{f_p(H^p_i) : i \in \{1,2\}, p \in [t]\}$ is chosen independently.
Note that since $|H^p_3|,\ldots,|H^p_r| \leq s^{r-2}$ (by Part~\ref{prop:SmallerClassSize} of Proposition~\ref{prop_construction}) it is indeed possible to request that $|H^1_i|,\ldots,|H^t_i|$ are disjoint subsets of $V'_i$, as $s^{r-2}t = B(\delta)n^{1-1/r} < (n-1-t)/r$, for large enough $n$.

Define the graph $H(f_1,\ldots,f_t)$ to have vertex set $V'_1 \cup \cdots \cup V'_r$ and edge set
\[
E(H(f_1,\ldots,f_t)) = \bigcup_{p \in [t]} \{ xy : f_p^{-1}(x)f_p^{-1}(y) \in E(H^p) \}.
\]
We define $G(f_1,\ldots,f_t)$ to be a graph on the same vertex set as $G_{I}$ and with edge set 
\[ E(G(f_1,\ldots,f_t)) = \{x_py : y \in f_p(H^p),\ p \in [t]  \} \cup  E(T_r(n-t)) \setminus E(H(f_1,\ldots,f_t)). 
\] In what follows, we show that the probability of making a  ``good'' choice for $G(f_1,\ldots,f_t)$ is non-zero. 




\begin{claim} \label{Claim:GStaysKrfree} Let $f_1,\ldots,f_t$ be any functions as described above. 
The graph $G(f_1,\ldots,f_t)$ is $K_{r+1}$-free.
\end{claim}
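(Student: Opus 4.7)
My plan is to analyze where a hypothetical $K_{r+1}$ in $G(f_1,\ldots,f_t)$ could possibly live. First, note that at stage II there are no edges inside $V_{r+1}$ (those are added only in stage III), and no edges within any $V_i$ (the Turán graph is $r$-partite and $H(f_1,\ldots,f_t)$ only consists of cross-class edges). Thus any clique of size $r+1$ uses at most one vertex from $V_{r+1}$, and at most one vertex from each $V_i$. Since $T_r(n-t)$ is $K_{r+1}$-free and we have only \emph{removed} edges from it when passing to $G(f_1,\ldots,f_t)\cap (V_1\cup\cdots\cup V_r)$, no $K_{r+1}$ can live entirely within $V_1 \cup \cdots \cup V_r$. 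So a $K_{r+1}$ must contain exactly one vertex $x_p \in V_{r+1}$ together with a copy of $K_r$ inside the neighbourhood of $x_p$, which by construction is $f_p(H^p)$.

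Next I would show that the graph $G(f_1,\ldots,f_t)$ restricted to $f_p(H^p)$ is a subgraph of (the isomorphic copy of) $\widetilde{H^p}$. This is the key observation: between two vertices $x \in f_p(H^p_i)$ and $y \in f_p(H^p_j)$ with $i\neq j$, the Turán edge $xy$ is removed from $G(f_1,\ldots,f_t)$ whenever $f_p^{-1}(x)f_p^{-1}(y) \in E(H^p)$, i.e., whenever $f_p^{-1}(x)f_p^{-1}(y)$ is an edge of $H^p$. Hence the only cross-class edges that can survive inside $f_p(H^p)$ are images of non-edges of $H^p$ lying in different classes, i.e., edges of $\widetilde{H^p}$. (Edge removals induced by other $H^q$'s only shrink this further.) By Part~\ref{prop:ConstructionKrFree} of Proposition~\ref{prop_construction}, $\widetilde{H^p}$ is $K_r$-free, so $f_p(H^p)$ contains no $K_r$ in $G(f_1,\ldots,f_t)$, contradicting the existence of the desired $K_{r+1}$.

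There is no real obstacle here: the argument is essentially bookkeeping, and the only substantive input is property \ref{prop:ConstructionKrFree} of the gadget $G_{r,s}$, which was established inductively in Proposition~\ref{prop_construction}. The mild subtlety is making sure the cases ``$K_{r+1}$ avoids $V_{r+1}$'' and ``$K_{r+1}$ hits $V_{r+1}$ in more than one vertex'' are each ruled out before invoking the gadget; both follow immediately from the fact that $G(f_1,\ldots,f_t)$ has no edges inside any single class $V_i$ nor inside $V_{r+1}$ at this stage of the construction. This makes the claim hold for every choice of injections $(f_1,\ldots,f_t)$, not just typical ones, which is exactly what is needed before performing the probabilistic analysis in the subsequent stages.
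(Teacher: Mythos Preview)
Your proof is correct and follows essentially the same approach as the paper: both argue that a hypothetical $K_{r+1}$ must have exactly one vertex in each of $V_1,\ldots,V_{r+1}$, so the neighbourhood of the vertex $x_p\in V_{r+1}$ would contain a $K_r$ inside $f_p(H^p)$, which is impossible since the induced graph there is a subgraph of $\widetilde{G}_{r,s}$ and the latter is $K_r$-free by Proposition~\ref{prop_construction}. Your version is simply more explicit in ruling out the degenerate cases (no edges inside $V_{r+1}$ or inside any $V_i$ at this stage), which the paper's proof leaves implicit.
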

\emph{Proof of Claim~\ref{Claim:GStaysKrfree}:} If a copy of $K_{r+1}$ is contained in $G(f_1,\ldots,f_t)$, it must have exactly one vertex in each class $V_1,\ldots,V_{r+1}$. 
Hence there must exist $p \in [t]$ so that $G(f_1,\ldots,f_t)$ induced on $f_p(V(H^p))$ contains a copy of $K_r$. 
This induced graph is contained in a copy of $\widetilde{G}_{r,s}$ (as in Proposition~\ref{prop_construction}), which is $K_r$-free, a contradiction. \qed\vspace{4mm}

We now show that every ``missing'' edge between $V_1,V_2$ are saturating edges. This is important as we need to ensure that the edges we remove in stage II are not just added back in, in the final stage. 

\begin{claim} \label{Claim:Stage2Sat} Let $f_1,\ldots,f_t$ be functions as described above.
Adding any edge, which is not already present, between the classes $V_1,V_2$ in $G(f_1,\ldots,f_t)$ creates a $K_{r+1}$.
\end{claim}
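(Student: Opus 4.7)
The plan is to show that any missing $V_1$--$V_2$ edge is witnessed by a single auxiliary copy $H^p$, and then to use Part~\ref{prop:ConstrSat} of Proposition~\ref{prop_construction} to locate, inside $\widetilde{H^p}$, an $(r-2)$-clique in the common non-neighbourhood of the relevant pair, whose image under $f_p$ will combine with $x_p$ and the new edge to produce the desired $K_{r+1}$.

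First I would observe that the vertices $v_1, v_2$ retain all of their original Tur\'{a}n-graph edges in $G(f_1,\ldots,f_t)$ (since the edge-removal graph $H(f_1,\ldots,f_t)$ lives entirely on $V'_1 \cup \cdots \cup V'_r$), so any non-edge $xy$ with $x\in V_1, y\in V_2$ must in fact satisfy $x \in V'_1$, $y \in V'_2$, and $xy \in E(H(f_1,\ldots,f_t))$. Unpacking the definition of $H(f_1,\ldots,f_t)$ and using that each $f_p$ respects the $r$-partition, there must then exist $p \in [t]$ with $x = f_p(x')$, $y = f_p(y')$, and $x'y' \in E(H^p)$, where $x' \in H^p_1$ and $y' \in H^p_2$. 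Applying Part~\ref{prop:ConstrSat} of Proposition~\ref{prop_construction} to $H^p$ then yields vertices $z_i \in H^p_i$ for $i=3,\ldots,r$ such that $\{x', y', z_3, \ldots, z_r\}$ forms a clique in $\widetilde{H^p}+x'y'$; set $w_i = f_p(z_i) \in V'_i$.

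The main verification is that the edges $xw_i$, $yw_i$, $w_iw_j$ for $3 \le i \neq j \le r$ all lie in $G(f_1,\ldots,f_t)$. They all cross partition classes, so they belong to $T_r(n-t)$; to see they were not deleted, I would use that, by the construction, the sets $\{f_q(H^q_i)\}_{q \in [t]}$ are pairwise disjoint for each $i \ge 3$, so $w_i$ belongs to $f_q(V(H^q))$ only when $q=p$. Consequently any edge of $H(f_1,\ldots,f_t)$ incident to $w_i$ must be the $f_p$-image of an edge of $H^p$, and since $x'z_i, y'z_i, z_iz_j \notin E(H^p)$ (they are edges of $\widetilde{H^p}$), none of $xw_i, yw_i, w_iw_j$ was removed. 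Finally, by definition of $G(f_1,\ldots,f_t)$ the vertex $x_p$ is joined to all of $f_p(V(H^p))$, hence to each of $x, y, w_3, \ldots, w_r$, so adding $xy$ produces a $K_{r+1}$ on $\{x_p, x, y, w_3, \ldots, w_r\}$. The only delicate point is the not-deleted verification, and this is precisely why the construction insists on genuine disjointness for the images of the smaller classes $3$ through $r$, rather than on random overlap.
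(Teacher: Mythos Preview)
Your argument is correct and follows essentially the same route as the paper's proof: identify the copy $H^p$ responsible for the missing edge, invoke Part~\ref{prop:ConstrSat} of Proposition~\ref{prop_construction} to find a transversal $(r-2)$-clique in $\widetilde{H^p}$, and use the disjointness of the $f_q$-images on classes $3,\ldots,r$ to certify that none of the relevant cross-edges were deleted, before adjoining $x_p$. Your write-up is in fact more explicit than the paper's on two points --- that the non-edge must avoid $v_1,v_2$, and why no other $f_q$ can interfere with an edge touching some $w_i$ --- but the underlying idea is identical.
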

\emph{Proof of Claim~\ref{Claim:Stage2Sat}:} 
Suppose that $e \not\in E_{G(f_1,\ldots,f_t)}(V_1,V_2)$. 
This means that $e \in E_{H(f_1,\ldots, f_t)}(V_1,V_2)$ and thus $e \in E_{f_p(H^p)}(V_1,V_2)$, for some $p \in [t]$.
Every such edge in $f_p(H^p)$, if deleted from $H^p$, is contained in an independent set $I$ with exactly one vertex in each part $V_1,\ldots,V_{r}$; this holds by Part~\ref{prop:ConstrSat} in Proposition~\ref{prop_construction}. Since each of the $H^1,\ldots,H^t$ are disjoint on $V_3,\ldots,V_r$, $I$ is a set containing only $e$, in $H$. This is the same as saying that $e$ is a $r$-saturating edge in $G(f_1,\ldots,f_t)$ in the graph induced on $f_p(V(H^p))$. 
Since the vertex $x_p\in V_{r+1}$ joins to all of $f_p(V(H^p))$, $e$ is $(r+1)$-saturating in $G(f_1,\ldots,f_t) $.\qed\vspace{4mm}

The following claim will help us show that we cannot find a large $r$-partite Tur{\'a}n graph in our final graph. 

\begin{claim} \label{Claim:NoBigBipartite} The probability that $G(f_1,\ldots,f_t)$ contains a complete bipartite graph $K_{\delta n/2,\delta n /2 }$ between $V_1,V_2$ is less than $1/2$.
\end{claim}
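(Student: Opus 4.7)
The plan is a standard first-moment / union bound argument. Fix a pair $A \subseteq V_1'$, $B \subseteq V_2'$ with $|A| = |B| = \delta n/2$, and let $p_{A,B}$ denote the probability that no edge of $H(f_1,\ldots,f_t)$ lies in $A \times B$. I will then union-bound over the $\binom{|V_1'|}{\delta n/2}\binom{|V_2'|}{\delta n/2} \leq (2e/(\delta r))^{\delta n}$ such pairs. The target is an estimate of the form $p_{A,B} \leq \exp(-c\,\delta n \log(2e/\delta))$ for a constant $c$ large enough that it beats the union-bound factor; the choice $B(\delta) = 16 r^{-2}\delta^{-1}\log(2e/\delta)$ is tuned for exactly this.

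To bound $p_{A,B}$, I use the structural input of Part~\ref{prop:ConstMatching} of Proposition~\ref{prop_construction}: each copy $H^p$ of $G_{r,s}$ contains a perfect matching $M^p$ of size $k := 2 s^{r-1}$ between its two largest parts $H^p_1, H^p_2$. Under the random embedding $f_p$, this becomes a random matching $(u_1^p, v_1^p),\ldots,(u_k^p, v_k^p)$ inside $V_1' \times V_2'$, and for $G(f_1,\ldots,f_t)$ to contain $K_{A,B}$ no edge of any $M^p$ may lie in $A \times B$. Since the $f_p$'s are independent across $p$, it suffices to upper bound the single-$p$ probability and then raise to the $t$-th power. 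For a fixed $p$ I expose the pairs of the matching sequentially: conditional on the past, $u_i^p$ is uniform on $V_1'$ minus the previously revealed $u$-values and $v_i^p$ is, independently, uniform on $V_2'$ minus the previously revealed $v$-values.

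The crux is to show that, conditional on the event that no prior pair landed in $A \times B$, the conditional probability that $(u_i^p, v_i^p) \in A \times B$ is still at least a constant multiple of $(\delta r)^2$. The mild subtlety is that the conditioning event is not a product event in the two coordinates, but the implication ``$u_j^p \in A \Rightarrow v_j^p \notin B$'' forces the total count of past $u$-values lying in $A$ plus past $v$-values lying in $B$ to be at most $i - 1 \leq k$; since $k = 2 n^{(r-1)/r}$ is much smaller than $|A| = |B| = \delta n/2$ for $n$ large, this still leaves enough room in both coordinates to recover the desired lower bound $\delta^2 r^2 / 8$. Iterating the product over $i$ and $p$, substituting the identity $t k = 2 B(\delta) n$, and comparing with the union-bound factor then yields the claim for $n$ sufficiently large. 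The main obstacle is precisely this per-step conditional landing bound; once it is in hand everything else is routine arithmetic.
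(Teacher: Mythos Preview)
Your proposal is correct and follows essentially the same first-moment argument as the paper: union bound over pairs $(A,B)$, independence across $p\in[t]$, and sequential exposure of the matching edges with a per-step conditional hitting bound obtained by conditioning on the full placement history $Y_{i-1},Z_{i-1}$. One minor simplification: your observation that the avoidance event forces $|\{j<i:u_j^p\in A\}|+|\{j<i:v_j^p\in B\}|\le i-1$ is correct but unnecessary---the trivial deterministic bound that at most $i-1\le k=2s^{r-1}$ past values have been placed in \emph{each} coordinate already yields $|A\setminus Y_{i-1}|,\,|B\setminus Z_{i-1}|\ge \delta n/2-2s^{r-1}\ge \delta n/4$, which is exactly the estimate the paper uses (giving the per-step bound $\exp(-\delta^2 r^2/16)$ rather than your $\delta^2 r^2/8$, but this constant is immaterial).
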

\emph{Proof of Claim~\ref{Claim:NoBigBipartite}:}
 Let $E(A, B)$ be the ``bad'' event that the pair $A \subset V'_1$, $B \subset V'_2$ have no edge of $H(f_1,\ldots,f_t)$ between them. We define the random variable $X$ to be the number of pairs of subsets $A \subset V'_1$, $B \subset V'_2$ of size $\delta n/2$ each, that have no edge of $H(f_1, \ldots, f_t)$ between them. 

To estimate the expectation of $X$ we fix two sets $A \subseteq V'_1$, $B \subseteq V'_2$ of size $\delta n/2$, and let $E_p = E_p(A,B)$, for $p \in [t]$, denote the event that $f_p(H^p)$ has no edge between $A,B$.
By independence, $\mathbb{P}(E(A,B)) = \prod_p \mathbb{P}(E_p)$.
We fix $p \in [t]$ and look to bound $\mathbb{P}(E_p)$.
We explicitly express the two largest vertex classes of $H^p$, $H^p_1 = \{ y_1,\ldots,y_{2s^{r-1}} \}$, and $H^p_2 = \{ z_1,\ldots,z_{2s^{r-1}} \}$, where $ y_iz_i $, $i \in [2s^{r-1}]$, are the edges of a perfect matching in $H^p$ between the two largest classes (which is guaranteed by Proposition~\ref{prop_construction}). 
For ease of notation, let $f = f_p$ and let us say that a pair $f(y_i),f(z_i)$ \emph{hits} $A,B$ if $f(y_i) \in A$ and $f(z_i) \in B$.
We will say that $f(y_i),f(z_i)$ \emph{misses} the pair, otherwise. 
We define $E_p(i)$ to be the event that $f(y_i),f(z_i)$ misses $A,B$.  

Note that $\mathbb{P}(E_p)$ is at most
\begin{equation} \label{equ:sec4bound} \mathbb{P}\left( \bigcap_{i=1}^{2s^{r-1}} E_p(i) \right) = \prod_{i=1}^{2s^{r-1}} \mathbb{P}\left(E_p(i)| E_p(i-1),\ldots,E_p(1) \right) .\end{equation} 
So to bound $\mathbb{P}(E_p)$, we need only to bound the terms in the above product. 
This is easily done as the conditional probabilities $\mathbb{P}\left(E_p(i)| E_p(i-1),\ldots,E_p(1) \right)$ do not differ too much from the unconditioned probabilities $\mathbb{P}(E_p(i))$.
To this end, note that $E_p(1),\ldots,E_p(i-1)$ depend only on the choices of $Y_{i-1} = \{ f(y_1),\ldots,f(y_{i-1}) \}, Z_{i-1} = \{ f(z_1),\ldots,f(z_{i-1})\} $.
Thus, we have 
\begin{align*}
 \mathbb{P}\left( E_p(i)| E_p(i-1),\ldots,E_p(1) \right) &\leq \max_{Y_{i-1},Z_{i-1}} \mathbb{P}\left(E_p(i)| Y_{i-1},Z_{i-1} \right) \\
 &= 1 - \min_{Y_{i-1},Z_{i-1}} \mathbb{P}\left(f(y_i),f(z_i) \text{ hits } A,B | Y_{i-1},Z_{i-1} \right)\\
 &\leq 1 - \min_{Y_{i-1},Z_{i-1}} \frac{|A \setminus Y_{i-1} ||B \setminus Z_{i-1} |}{ (|V_1'| - (i-1))^2 } \\
 &\leq 1 -  \left( \frac{ r(\delta n/2 - 2 s^{r-1}) }{ n } \right)^2 \\
 &\leq \exp(-\delta^2r^2/16) , \end{align*}
 
where the third inequality follows by recalling that $|V'_1|,|V'_2| \leq n/r$ and the last inequality follows by recalling that $\frac{\delta}{4} n > 2s^{r-1}$. So, from (\ref{equ:sec4bound}), we have 
\[ \mathbb{P}(E_p) \leq  \exp(-r^2\delta^2s^{r-1}/8), \] for each $p \in [t]$, and therefore
\[ \mathbb{P}(E(A,B)) = \prod_{p=1}^t \mathbb{P}(E_p) \leq \exp \left( - \frac{r^2\delta^2s^{r-1}t}{8} \right). \] So, by linearity of expectation, we have
\[
 \mathbb{E} X  \leq \binom{n}{\delta n/2}^2 \exp\left( - \frac{r^2\delta^2 s^{r-1}t}{8} \right).
\]

Using the standard inequality $\binom{n}{k} \le \left(\frac{ne}{k}\right)^k$, we have
\[
	\mathbb{E}X \le (2e/\delta)^{\delta n}\exp\left( - \frac{r^2\delta^2 s^{r-1}t}{8} \right) =  \exp\left(\delta n \log(2e/\delta) - \frac{r^2\delta^2 s^{r-1}t}{8}\right).
\]
Recalling our choices of $s = n^{1/r}$ and $t = B(\delta)n^{1/r} = 16r^{-2}\delta^{-1} \log(2e/ \delta) n^{1/r}$, we have $\mathbb{E} X < 1/2$ for sufficiently large $n$. This completes the proof of Claim~\ref{Claim:NoBigBipartite}. \qed\vspace{4mm}

We now define $G_{II}$ to be a graph of the form $G(f_1,\ldots,f_t)$ for which there are no copies of $K_{\delta n/2, \delta n/2}$ between vertex classes
$V'_1,V'_2$. Such a graph $G(f_1,\ldots,f_t)$ exists with non-zero probability, by Claim~\ref{Claim:NoBigBipartite}.

To define our final graph $G$, we choose a maximal $K_{r+1}$-free graph which contains $G_{II}$. Since $G_{II}$ is $K_{r+1}$-free, $G$ is also $K_{r+1}$-free and, trivially, $G$ is $(r+1)$-saturated. Using inequalities $t_r(n-t) \ge t_r(n) - tn$ and $e(G_{r, s}) \le 4(r-1)s^r$, we have that
\begin{align*}
 e(G) &\geq e(G_{II}) \\ 
 &\geq t_r(n) -  tn - te(G_{r,s}) \\
 &\geq t_r(n) - B(\delta)n^{\frac{r+1}{r}} - 4(r-1)s^rt \\ 
 &\geq t_r(n) - 4rB(\delta)n^{\frac{r+1}{r}} = t_r(n) - C(r,\delta)n^{\frac{r+1}{r}}.
 \end{align*}
We now observe that $G$ cannot contain a copy $T$ of $T_r(\delta r n)$. Suppose, towards a contradiction, that 
$G$ contains $T$. First note that $G - V_{r+1}$ is $r$-partite with vertex partition
$V_1,\ldots,V_r$. This is because the addition of any pair $e = uv$ to $G_{II}$, \emph{within} some $V_i$, would form a copy of $K_{r+1}$ on vertex set 
$\{u,v\} \cup (\{v_1,\ldots,v_r \} - \{v_i\})$. Therefore $G-V_{r+1}$ is $r$-partite. This means that
$G$ must contain a copy $K$ of $K_{\delta n/2,\delta n/2}$ between $V'_1,V'_2$, as $|T \cap V_{r+1}| \leq |V_{r+1}| = t < \delta n/4$ and therefore $|T \cap (V_1\cup \cdots \cup V_r)| \geq \delta rn/2$. 
Now since all non-edges between $V_1,V_2$ are $(r+1)$-saturating in $G_{II}$ (Claim~\ref{Claim:Stage2Sat}), we have that no edges were added between 
$V_1,V_2$ in forming $G$. 
In other words, $G_{II}[V_1,V_2] = G[V_1,V_2]$. 
This implies that $K \cong K_{\delta n/2, \delta n/2}$ is also a subgraph of $G_{II}$, which contradicts Claim~\ref{Claim:NoBigBipartite}. 
This completes the proof of Theorem~\ref{thm:RandomConstruction}.
\end{proof}



\section{Final Remarks and Open Problems} \label{final_remarks}

Recall that $g_r(n,m)$ is defined to be the maximum number of vertices that one is required to remove from an $n$-vertex, $(r+1)$-saturated graph with at least $t_r(n) - m$ edges, so that the remaining graph is complete $r$-partite. 
Combining Theorems~\ref{thm2} and~\ref{thm: construction} we see that for any $\varepsilon > 0$, if $n \ge n_0(r, \varepsilon)$ and
 $(\frac{r-1}{r}+\varepsilon)n \le m  \le n^{\frac{r+1}{r}}$ one has
\[ c_{r, \varepsilon}mn^{-1/r} \leq g_r(n, m) \leq C_rmn^{-1/r}, \]
where $c_{r,\varepsilon}$ depends on $r, \varepsilon$, and $C_r$ depends only on $r$. 
However, our construction does not work if $n/r \le m \le \frac{r-1}{r}n$. We leave the determination of $g_r(n, m)$ in this range of $m$ as an open problem.

\begin{problem}\label{prob:linear_range}
Determine $g_r(n, m)$ for $n/r \le m \le \frac{r-1}{r}n$.
\end{problem}

When the number of edges is in a suitable range we conjecture that the lower bound coming from our construction
$H_{r, s, t}(n)$ (see Section~\ref{sec:constructions}), for some appropriately chosen parameters $s, t$, should be close to the optimal one.
Recall that by $f(n) \ll g(n)$ we mean that $f(n)/g(n) \rightarrow 0$ as $n\rightarrow \infty$.

\begin{conjecture}\label{conj:constant}
Let $m = m(n)$ be a function satisfying $n \ll m \ll n^{{\frac{r+1}{r}}}$. 
Then
\[
    g_r(n, m) = (1+o(1))g_{r}(H_{r, s, t}(n)),
\]
 as $n \rightarrow \infty$ for some choice of $s = s(n)$ and $t = t(n)$.
\end{conjecture}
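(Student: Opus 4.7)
My plan is to attack the conjecture by matching a precise lower bound from an optimized family of constructions $H_{r,s,t}(n)$ with a sharpened upper bound on $g_r(n,m)$. As a first step, I would compute $g_r(H_{r,s,t}(n))$ exactly (not just the lower bound $2ts^{r-1}$ obtained in Proposition~\ref{main_construction}): one argues that removing one of the two largest vertex classes from each of the $t$ copies of $G_{r,s}$ already destroys every bad non-edge, so $g_r(H_{r,s,t}(n)) = 2ts^{r-1}$ up to additive lower-order terms. Then, under the edge constraint $\frac{r-1}{r}tn + 4(r-1)ts^{r} \le m$ guaranteed by Proposition~\ref{main_construction}, elementary Lagrangian calculus selects the optimal parameters $s_0 \sim \bigl((r-1)n/(4r)\bigr)^{1/r}$ and $t_0 \sim m/((r-1)n)$ (valid in the range $n \ll m \ll n^{(r+1)/r}$; one may easily check that the hypothesis $n \ge 4s^{r-1}tr + t$ of Proposition~\ref{main_construction} holds for this choice). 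This yields an explicit asymptotic $g_r(H_{r,s_0,t_0}(n)) = (C^{*}(r)+o(1))\,m\,n^{-1/r}$ for an explicit constant $C^{*}(r)$ depending only on $r$.

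The heart of the proof will be obtaining the matching upper bound $g_r(n,m) \le (C^{*}(r)+o(1))\,m\,n^{-1/r}$. I would revisit the proof of Theorem~\ref{thm2} and promote Lemma~\ref{lemma2} to a stability statement: the right question is for which $r$-partite, $K_r$-free graphs the inequality $|I_{\widetilde{G}}(R)| \ge c_r |R|^{r/(r-1)}$ is nearly sharp, and with what sharp constant $c_r$. I expect that, up to symmetries, the extremal $G$ is the $r$-partite complement $\widetilde{G}_{r,s}$ from Section~\ref{subsec:aux_constructions}, and that a stability version holds: if $R$ achieves near-equality, then $G$ is structurally close to a disjoint union of copies of $\widetilde{G}_{r,s}$ for the appropriate $s$. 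Combined with the $(r+1)$-saturation condition and the approximate $r$-partite structure supplied by Lemma~\ref{lemma1}, this would force $G$ to resemble $H_{r,s,t}(n)$ up to $o(n)$ vertex changes, which is enough to close the gap.

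The main obstacle will be proving the stability version of Lemma~\ref{lemma2} with a sharp constant. The current inductive proof proceeds through H\"older's inequality and a random augmentation step, each of which loses a multiplicative constant, and these losses compound over the $r-1$ layers of induction; as it stands, the upper bound constant $C_r$ differs from the lower bound $C^{*}(r)$ by a factor polynomial in $r$. To preserve the factor $(1+o(1))$, one likely needs a different combinatorial characterization of the extremal covering problem --- perhaps a direct structural analysis of $K_{r-t}$-matchings inside $G[X_1,\dots,X_{r-2}]$ that identifies $G_{r,s}$ as uniquely extremal, or a supersaturation/entropy-style argument that bypasses the lossy Hölder step. Controlling error propagation across the induction so that the cumulative loss is $o(1)$ rather than a constant factor is where the bulk of the technical work will have to go.
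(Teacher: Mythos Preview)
The statement you are addressing is Conjecture~\ref{conj:constant}, which the paper explicitly leaves open in Section~\ref{final_remarks}; there is no proof in the paper to compare your proposal against. What you have written is a research plan for an open problem, not a proof, and the authors themselves indicate only that they \emph{believe} the construction $H_{r,s,t}(n)$ is extremal in the range $n \ll m \ll n^{(r+1)/r}$.

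On the content of your plan: your first step contains a small error. Removing one of the two largest vertex classes of $G_{r,s}$ does \emph{not} cover all edges of $G_{r,s}$ once $r\ge 3$; for example, in $G_{3,s}$ the edges $x_p y$ with $y\in A_2^q$, $q\neq p$, survive the deletion of $A_1$. What is true is that $\alpha(G_{r,s})=2s^{r-1}$ exactly (this follows from the recursion $\alpha(G_{r+1,s})=s\,\alpha(G_{r,s})$, since including any $x_p$ forces the independent set to miss all $H_q$ with $q\neq p$), and hence the minimum vertex cover of $G_{r,s}$ has size $|G_{r,s}|-2s^{r-1}=2s^{r-1}+O(s^{r-2})$. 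Together with the removal of $A_{r+1}$ this does give $g_r(H_{r,s,t}(n))=(1+o(1))2ts^{r-1}$, so your conclusion in step~1 is salvageable with the corrected argument.

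Your identification of the real obstacle is accurate: the gap between the constants $c_{r,\varepsilon}$ and $C_r$ in the paper comes precisely from the losses in Lemma~\ref{lemma2} (H\"older, the random augmentation, and the compounding over $r-1$ inductive layers). Turning that lemma into a sharp stability statement---characterising when $|I_{\widetilde G}(R)|\ge c_r|R|^{r/(r-1)}$ is nearly tight---is genuinely the heart of the problem, and nothing in the paper suggests how to do it. Your proposal is a reasonable outline of where the difficulty lies, but it is not a proof; until the sharp-constant version of Lemma~\ref{lemma2} is actually established, the conjecture remains open.
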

We remark that an optimization yielded our particular choice of $G_{r, s_1, \ldots, s_{r-1}}$ with $s_1 = 2s$ and 
$s_i = s$ for all $i \neq 1$ (which we simply called $G_{r, s}$). We believe that a similar optimization should yield the right choice of parameters $s = s^*, t = t^*$ to satisfy the conclusion of Conjecture~\ref{conj:constant}.
It seems plausible that the resulting construction $H_{r, s^*, t^*}(n)$ is indeed extremal for $g_r(n,m)$ when $m$ is in the range $n \ll m \ll n^{{\frac{r+1}{r}}}$ given above. 
\paragraph{}
It is natural to consider the largest $k$ such that the Tur\'{a}n subgraph $T_r(k)$ must appear in every $(r+1)$-saturated graph $G$, with $e(G) \geq t_r(n) - m$ edges, where $m \sim Cn^{\frac{r+1}{r}}$. This amounts to the following problem regarding the function $g_r^*(n,m)$, the ``balanced'' analogue of $g_r(n,m)$.

\begin{problem}\label{prob: balanced}
Determine $g^*_r(n,Cn^{\frac{r+1}{r}})$, for each $C \in \mathbb{R}^+$ and sufficiently large $n$.
\end{problem}

Recall that Theorem~\ref{thm:RandomConstruction} shows that $g^*_r(n,Cn^{\frac{r+1}{r}}) \geq \left(1 - \frac{c' \log(Cr)}{C}\right)n$, for $C$ large and fixed and $n \rightarrow \infty$, but we have no non-trivial upper bounds for $g^{*}_r(n,Cn^{\frac{r+1}{r}})$, when $C$ is large. 

\section{Acknowledgements} 
Part of this research was conducted at the University of Cambridge. We are grateful to the Combinatorics Group at Cambridge and for the hospitality of Trinity College, Cambridge. We should also like to thank B\'{e}la Bollob\'{a}s, Matthew Jenssen, and the anonymous referees for many helpful comments and suggestions that greatly improved the presentation of this paper.

\bibliographystyle{siam}
\bibliography{Turan_refs}

\begin{thebibliography}{10}

\bibitem{afgs}
{\sc K.~Amin, J.~Faudree, R.~J. Gould, and E.~Sidorowicz}, {\em {O}n the
  non-$(p-1)$-partite {$K_p$}-free graphs}, Discuss. Math. Graph Theory, 33
  (2013), pp.~9--23.

\bibitem{aes}
{\sc B.~Andr{\'a}sfai, P.~Erd\H{o}s, and V.~S{\'o}s}, {\em On the connection
  between chromatic number, maximal clique and minimal degree of a graph},
  Discrete Math., 8 (1974), pp.~205--218.

\bibitem{bol}
{\sc B.~Bollob{\'a}s}, {\em Modern Graph Theory}, vol.~184 of Graduate Texts in
  Mathematics, Springer-Verlag, New York, 1998.

\bibitem{b}
{\sc A.~Brouwer}, {\em Some lotto numbers from an extension of {T}ur{\'a}n's
  theorem}, Afdeling Zuivere Wiskunde [Department of Pure Mathematics], 152
  (1981).

\bibitem{erd2}
{\sc P.~Erd\H{o}s}, {\em Some recent results on extremal problems in graph
  theory ({R}esults)}, in Theory of Graphs, P.~Rosenstiehl, ed., Gordon and
  Breach, New York, Dunod, Paris, 1967, pp.~118--123.

\bibitem{erd1}
\leavevmode\vrule height 2pt depth -1.6pt width 23pt, {\em On some new
  inequalities concerning extremal properties of graphs}, in Theory of Graphs,
  P.~Erd\H{o}s and G.~Katona, eds., Academic Press, New York, 1968, pp.~77--81.

\bibitem{ht}
{\sc D.~Hanson and B.~Toft}, {\em $k$-saturated graphs of chromatic number at
  least $k$}, Ars. Combin., 31 (1991), pp.~159--164.

\bibitem{kp}
{\sc M.~Kang and O.~Pikhurko}, {\em Maximum {$K_{r+1}$}-free graphs which are
  not $r$-partite}, Mat. Stud., 24 (2005), pp.~12--20.

\bibitem{Nikiforov}
{\sc V.~Nikiforov}, {\em Some new results in extremal graph theory}, in Surveys
  in combinatorics 2011, London Math. Soc. Lecture Note Ser., 392, Cambridge
  Univ. Press, Cambridge, 2011, pp.~141--181.

\bibitem{Nikiforov_Rousseau}
{\sc V.~Nikiforov and C.~{\relax C}. Rousseau}, {\em Large generalized books
  are $p$-good}, J. Combinatorial Theory Ser. B, 92 (2004), pp.~85--97.

\bibitem{s}
{\sc M.~Simonovits}, {\em A method for solving extremal problems in graph
  theory, stability problems}, in Theory of Graphs (Proc. Colloq. Tihany,
  1966), Academic Press, New York, 1968, pp.~279--319.

\bibitem{t}
{\sc P.~Tur{\'a}n}, {\em On an extremal problem in graph theory(in
  {H}ungarian)}, Math. Fiz. Lapok, 48 (1941), pp.~436--452.

\bibitem{tu}
{\sc M.~Tyomkyn and A.~J. Uzzell}, {\em Strong {T}ur{\'a}n stability},
  Electronic Journal of Combinatorics, 22 (2015).

\bibitem{TU_euro}
{\sc M.~Tyomkyn and A.~J. Uzzell}, {\em Strong {T}ur{\'{a}}n stability},
  Electronic Notes in Discrete Mathematics, 49 (2015), pp.~433--440.

\end{thebibliography}

\end{document}